\theoremstyle{plain}
\newtheorem{thm}{Theorem}
\newtheorem{cor}{Corollary}
\newtheorem{lem}{Lemma}
\newtheorem{prop}{Proposition}
\theoremstyle{remark}
\newtheorem{remark}{Remark}
\newtheorem{definition}{Definition}
\newcommand{\bmu}{\boldsymbol{\mu}}
\newcommand{\bnu}{\boldsymbol{\nu}}
\newcommand{\C}{\mathbf{C}}
\newcommand{\CBI}{\operatorname{CBI}}
\newcommand\cev[1]{\overleftarrow{#1}}
\newcommand{\cl}{\operatorname{\mathbf{cl}}}
\newcommand{\D}{\mathbb{D}}
\newcommand{\downto}{\downarrow}
\newcommand{\E}{\mathbb{E}}
\newcommand{\eps}{\varepsilon}
\newcommand{\f}{\mathfrak{f}}
\newcommand{\GWI}{\operatorname{GWI}}
\newcommand{\h}{\operatorname{\mathbf{ht}}}
\newcommand{\HH}{\mathbf{H}}
\newcommand{\J}{\mathbf{J}}
\newcommand{\LL}{\mathbf{L}}
\newcommand{\MCB}{\operatorname{MCB}}
\newcommand{\MCBI}{\operatorname{MCBI}}
\newcommand{\N}{\mathbb{N}}
\newcommand{\prej}{\overset{(j)}{\prec}}
\newcommand{\R}{\mathbb{R}}
\newcommand{\tFrak}{\mathfrak{t}}
\newcommand{\T}{\mathcal{T}}
\newcommand{\U}{\mathbf{U}}
\newcommand{\upto}{\uparrow}
\newcommand{\X}{\mathbf{X}}
\newcommand{\Y}{\mathbf{Y}}
\newcommand{\Z}{\mathbf{Z}}
\begin{document}

\begin{frontmatter}
\title{Encoding multitype Galton-Watson forests and a multitype Ray-Knight theorem}
\runtitle{Multitype Ray-Knight theorem}

\begin{aug}
\author[A]{\fnms{David} \snm{Clancy, Jr.}\ead[label=e1]{djclancy@uw.edu}},

\address[A]{University of Washington, \printead{e1}}

\end{aug}

\begin{abstract}
		We provide a simple forest model to encode the genealogical structure of a multitype Galton-Watson process with immigration. We provide two encodings of these forests by stochastic processes. We show, under appropriate conditions, the depth-first encodings of each particular type converge to a solution to a system of stochastic integral equations involving height processes perturbed by functionals of their local times. The forest picture allows us to extend the Ray-Knight theorem and show that local time of the solution to the system of equations form a multitype continuous state branching process with immigration. These assumptions underlying our weak convergence arguments are easily seen to be met in the Brownian setting, and more generally an $\alpha$-stable setting for any $\alpha\in(1,2]$.
\end{abstract}

\begin{keyword}[class=MSC2020]
	\kwd[Primary ]{60J80}
	\kwd{60F17}
	\kwd[; secondary ]{60J55}
\end{keyword}

\begin{keyword}
\kwd{Mutlitype branching processes}
\kwd{Ray-Knight theorems}
\kwd{height processes}
\kwd{L\'{e}vy processes}
\end{keyword}

\end{frontmatter}


	\section{Introduction}
	
	Galton-Watson processes are simple models for population evolution: each individual in the population gives birth to a random number of children each with the same law. By considering the entire history of the population, we often view these processes as encoded by an underlying random forest, i.e. the collection of family trees. In his continuum random tree (CRT) trilogy Aldous \cite{Aldous_CRT1, Aldous_CRTII,Aldous.93} shows that certain Galton-Watson trees conditioned on having total progeny $n$ possesses a metric space scaling limit as $n\to\infty.$ Since then, much work has been done on the scaling limits of unconditioned versions of Galton-Watson forests, with the possible inclusion of some immigration term. See \cite{LL_BPLP, LL_BPLP2, AP_BBAss, Pitman_SDE, DL_Levy, Duquesne_CRTI} among others.
	
	Multitype Galton-Watson processes and forests are a simple extension of single type Galton-Watson processes. They describe the evolution of a population of individuals made up of different types and have been suggested as model of various biological phenomena including cell mutation, see for example \cite{KA.15}. Limiting genealogical structure for these models have not yet been developed in general, although some progress has been made. In \cite{Miermont_multiType}, Miermont shows under certain assumptions including criticality and finite variance a multitype Galton-Watson forest behaves asymptotically much like a Galton-Watson forest after ignoring types. That is, he shows that the Harris path on the multitype Galton-Watson forest has a scaling limit which is a reflected Brownian motion, which is the scaling limit of the Harris path for certain Galton-Watson forests of a single type \cite{LG_RandomTreeApps}. The stable analog of this result was obtained in \cite{BO_AlphaStablemultitype}. Both of these works ignore types. See also \cite{dR_infiniteTypes} for a model with infinitely many types, and \cite{HS.19} for a model where certain random multitype trees have metric space scaling limits. The purpose of this article is to describe one way in which we can obtain scaling limits for a collection of Harris paths where we distinguish individuals by type. 
	
	A multitype branching process with immigration and with $N$ types, say $Z = (Z^1,\dotsm, Z^N)$, is a Markov chain on $\{0,1,\dotsm\}^N$ such that 
	\begin{equation*}
	Z^j(h+1) = \left(\sum_{i=1}^j \sum_{\ell = 1}^{Z^i(h)} \xi_{h,\ell}^{i, j} \right)  + \eta_{h}^{j},
	\end{equation*} where for each $i$ the random vectors $(\xi^{i}_{h,\ell}:= (\xi^{i,1}_{h,\ell}, \dotsm \xi^{i,N}_{h,\ell}); h\ge 0, \ell\ge 1)$ are i.i.d. and $(\eta_h  = (\eta_h^1,\dotsm, \eta_h^N); h\ge 0)$ are i.i.d. and these vectors are independent of each other. There can be some correlation between the coordinates of the vector; however, our results will assume that the coordinates are independent. The vectors $\xi^{i}_{h,\ell}$ describe the types of number and types of children that individuals of type $i$ give birth to in the population and the vectors $\eta_h$ describe immigration from an outside source. We will encode these multitype forests in two ways. One is in a breadth-first manner which captures information on the branching structure \cite{CPU_affine,CGU_CSBPI} and the other is depth-first manner which encodes certain genealogical information \cite{LG_RandomTreeApps, Duquesne_CRTI}. Both of these encodings can be viewed as an extension of the {\L}ukasiewicz path and is based on a similar encoding in the work of Chaumont and Liu in \cite{CL_MultiType} where the authors encode multitype forests without immigration in a terms of a family of $N\times N$ random walks with increments related to the random variables $\xi^{i,j}_{h,\ell} - 1_{[i=j]}$ and prove an extension of the Otter-Dwass formula on the total population size of each type. See also \cite{AH.20}.

	Continuous time analogs of the random walk encodings were obtained in \cite{CPU_affine}, see equations \eqref{eqn:dtc} and \eqref{eqn:lampertiGen} below, where random walks are replaced with spectrally positive L\'{e}vy processes or subordinators. In particular they show that any continuous state multitype branching processes with immigration $\Z = (\Z_t = (\Z_t^1,\dotsm, \Z^N_t);t\ge 0)$ is a multiparameter time-change of a collection of $N+1$ many $\R^N$-valued L\'{e}vy processes. Based on this continuous time extension, Chaumont and Marolleau \cite{CM.20} develop some fluctuation theory for certain types of random fields obtained from spectrally positive L\'{e}vy processes. These random fields were called spectrally positive, additive L\'{e}vy fields and appear quite naturally in the study of extinction times for multitype continuous state branching processes without immigration.

	Before turning to some of our results, let us briefly look at the work of Abraham and Mazliak \cite{AM_BPofBM} to formulate the particular form that our stochastic integral equations will satisfy. Expanding on previous works on the branching properties of Brownian paths, they show that a weak solution to the stochastic differential equation
\begin{equation*}
	dZ_v = 2\sqrt{Z_v}dW_v + \Delta'(v)\,dv, \qquad Z_0 = 0
\end{equation*} for sufficiently nice functions $\Delta:\R_+\to \R_+$ can be obtained by looking at the local time of the process
\begin{equation*}
	H_t = \beta_t + \Delta^{-1}(\ell_t)
\end{equation*} where $\beta  = (\beta_t;t\ge 0)$ is a reflected Brownian motion and $\ell = (\ell_t;t\ge 0)$ is its local time at time $t$ and level $0$. The authors give a nice tree picture capturing the underlying branching structure, which is also described in \cite[Chatper 6]{Pitman_CombStoch}. For another description and proof of this result, see \cite{MY_BM}. The calculus of continuous functions of finite variations allows us rewrite the process $H$ as the solutions to the following integral equation:
\begin{equation}\label{eqn:AMH}\left\{
	\begin{array}{l}
		H_t = \beta_t + J_t\\
		\displaystyle J_t = \int_0^t \frac{1}{\Delta'(J_s)}\,d\ell_s
	\end{array}\right..
\end{equation} See \cite[Proposition 0.4.6]{RY}. The characterization above is what we wish to generalize to a system of $N$ stochastic processes representing different types of the multitype branching processes. For more results on the local time of a Brownian motion with drift depending on its local time see, for example, \cite{LPW_RKthm, Le_Note, BP_RKThm} as well as Section 5.2 in \cite{PW_LogisticGrowth}.

\subsection{Assumptions and Statement of Results} \label{sec:statementOfResults}

We will consider systems based on $N$ independent L\'evy processes $\X^1,\dotsm,\X^N$. In order for our proof techniques to follow through, we must make a few technical assumptions. We assume that for each $j\in [N] :=\{1,\dotsm,N\}$ 
\begin{equation*}
	\E\left[\exp\left\{ -\lambda \X_1^j\right\} \right] = \exp\{\psi_j(\lambda)\}
\end{equation*} where 
\begin{equation}\label{eqn:psijs} \begin{array}{l}
		\displaystyle \psi_j(\lambda) = \alpha_j\lambda+ \beta_j \lambda^2  + \int_{(0,\infty)}(e^{-\lambda r}-1-\lambda r1_{[r<1]})\,\pi_j(dr)\\
		\displaystyle \int_1^\infty \frac{1}{\psi_j(u)}\,du <\infty
	\end{array}
\end{equation} with $\alpha_j,\beta_j \ge 0$ and $\pi_j$ is a Radon measure on $(0,\infty)$ with $\int_{(0,\infty)} (r\wedge r^2)\,\pi_j(dr) <\infty$. We say that $(-\psi_j$) is the Laplace exponent of $\X^j$. The conditions above are the technical conditions of \cite{DL_Levy} which guarantee certain nice properties of height processes, which will be discussed later in Section \ref{sec:hp}.

We also must make an assumption that these L\'evy processes can arise under the \textit{same scaling regime}, which we now describe. We say that a probability measure on $\N$ is (sub)critical if $\sum_{k\ge 0} k\mu(k)\le 1$. We assume that there exists sequences of (sub)critical probability measures $(\mu^{j,p};j\in [N],p\ge 1)$ on $\N_0 = \{0,1,\dotsm\}$ with generating functions $g^{j,p}$. Recursively define $g^{j,p}_n$ by $g^{j,p}_n = g^{j,p}_{n-1}\circ g^{j,p}$ with $g^{j,p}_0 = id$. For each $j$ and $p$, let $(\xi^{j,p}_\ell;\ell \ge 0)$ be independent random variables with common law $\mu^{j,p}$. We make the assumption that there exists an increasing sequence $(\gamma_p; p \ge 1)$ of non-negative integers such that for each $j\in [N]$
\begin{equation}\label{eqn:assMu}\begin{array}{l}
		\displaystyle \frac{1}{p} \sum_{\ell = 1}^{p\gamma_p} (\xi^{j,p}_\ell -1) \Longrightarrow \X^j_1\\
		\displaystyle \liminf_{p\to \infty }g^{j,p}_{[\delta \gamma_p]} (0) >0,\qquad \forall \delta>0
	\end{array}.
\end{equation}

\begin{definition}
	We call a family of Laplace exponents $(\psi_j)$ for $j\in [N]$ \textbf{admissible} if they satisfy \eqref{eqn:psijs} and can their corresponding L\'evy processes can be obtained by \eqref{eqn:assMu}.
\end{definition}

\begin{remark}
	The definition of admissible may seem to be quite restrictive; however, Theorem 2.3.2 in \cite{DL_Levy} implies that $\psi_j(\lambda) = c_j\lambda^\alpha$ for $\alpha\in(1,2)$ and $c_j>0$ are admissible. Also, the Lindeberg-Feller CLT implies that $\psi_j(\lambda) = \alpha_j\lambda + \beta_j\lambda^2$ are admissible for $\alpha_j\ge 0,$ $\beta_j>0$. Moreover, anytime \cite[Corollary 2.5.1]{DL_Levy} applies, we can find a family of admissible $\psi_j$'s. 
\end{remark}
As mentioned above, if $(\psi_j;j\in[N])$ are admissible, then there exists a $\psi_j$-height processes $\HH^j = (\HH^j_t;t\ge 0)$ for each $j\in[N]$. The value $\HH^j_t$ measures in a local time sense the size of the set 
\begin{equation} \label{eqn:ltForH}\{s\le t: \inf_{r\in[s,t]} \X^j_r = \X^j_{s-} \}
\end{equation} More details of this process will be discussed in Section \ref{sec:hp}; however, we leave a full discussion of these processes to \cite{DL_Levy}. Due to integral assumption in \eqref{eqn:psijs}, there exists a continuous modification of the process $\HH^j$ \cite[Theorem 1.4.3]{DL_Levy}, and from now on we will only consider this modification.

We let $\ell^j = (\ell^j_t;t\ge0)$ denote the local time at time $t$ and level $0$ of the process $\HH^j$. It is defined as the $L^1$-limit (see \cite[Lemma 1.3.2]{DL_Levy}):
\begin{equation}\label{eqn:ltLim}
	\lim_{\eps\downto 0} \frac{1}{\eps} \int_0^t 1_{[\HH^j_s\le \eps]}\,ds = \ell_t^j = -\inf_{s\le t} \X^j_s.
\end{equation} We also let $\tau^j_x = \inf\{t: \ell^j_t>x\}$ be the right-continuous inverse of $\ell^j$.

We wish to generalize equation \eqref{eqn:AMH} to systems of equations. To do this, we fix $\delta_j> 0$, $x_j\ge 0$ for $j\in [N]$ and $\alpha_{i,j}\ge 0$ for $i\neq j$. We study the following system of stochastic equations
\begin{equation}\label{eqn:HSDE}\left\{
	\begin{array}{l}
		\cev{\HH}^j_t  = \HH^j_t + \J^j_t\\
		\displaystyle \J^j_t = {\int}_{\tau_{x_j}^j}^{t\vee \tau_{x_j}^j} \left\{\delta_j + \sum_{i\neq j} \alpha_{i,j} \LL_\infty^{\J^j_s}(\cev{\HH}^i) \right\}^{-1}\,d\ell_s^j
	\end{array} \right.,
\end{equation} where $\LL_\infty^v(\cev{\HH}^j)$ is the terminal local time of the process $\cev{\HH}^j$. More precisely we prove the following theorem:
\begin{thm}\label{thm:existH}
	Fix any $\delta_j>0$, $x_j\ge 0$ for $j\in[N]$ and any $\alpha_{i,j}\ge 0$ for $i\neq j\in[N]$. Then for any admissible family of Laplace exponents $(\psi_j)_{j\in[N]}$, and independent $\psi_j$-height processes $\HH^j$ there exists a weak solution to the stochastic equation \eqref{eqn:HSDE}. Moreover, in the solution the processes $(\LL_\infty^v(\cev{\HH}^j);v\ge 0)$ are c\`adl\`ag and, almost surely, for any continuous function $g$ with compact support
	\begin{equation*}
		\int_0^\infty g\left(\cev{\HH}^j_t \right)\,dt = \int_0^\infty g(v) \LL_\infty^v(\cev{\HH}^j)\,dv
	\end{equation*}
\end{thm}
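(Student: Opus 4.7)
The plan is to obtain the weak solution to \eqref{eqn:HSDE} as a scaling limit of discrete multitype Galton-Watson forests with immigration, encoded in the depth-first manner suggested earlier in the paper. The admissibility hypothesis \eqref{eqn:assMu} is precisely the condition that makes such a scaling limit tractable for the height processes via the framework of \cite{DL_Levy}. For each $p\ge 1$, I would assemble a multitype forest on scale $p$ whose offspring laws are $\mu^{j,p}$ and whose immigration rates and cross-type grafting rates are tuned so that, after rescaling space by $p$ and time by $p\gamma_p$, the parameters $\delta_j$, $x_j$, and $\alpha_{i,j}$ of \eqref{eqn:HSDE} appear in the limit. Encoding each type via its depth-first contour produces discrete processes $\HH^{j,p}$, $\J^{j,p}$, $\cev{\HH}^{j,p} = \HH^{j,p}+\J^{j,p}$, $\ell^{j,p}$, together with a discrete occupation field counting the number of type-$i$ individuals whose height exceeds $v$ in the subforest explored up to time $s$. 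By the explicit depth-first exploration, these processes satisfy a discrete version of \eqref{eqn:HSDE}.

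Next I would establish joint tightness and extract a subsequential weak limit. Convergence $\HH^{j,p}\Rightarrow \HH^j$ with continuous limit is provided by the results of \cite{DL_Levy} under admissibility. The rescaled drift $\J^{j,p}$ is non-decreasing with derivative against $\ell^{j,p}$ bounded above by $1/\delta_j$, which combined with tightness of $\ell^{j,p}$ (the running infimum of a tight random walk, cf.\ \eqref{eqn:ltLim}) yields tightness of $\J^{j,p}$ and hence of $\cev{\HH}^{j,p}$. Tightness of the occupation density field $v\mapsto \LL^v_\infty(\cev{\HH}^{j,p})$ can be obtained via the Kolmogorov continuity criterion applied to moments of the population counts at each height, in the spirit of \cite{DL_Levy,Miermont_multiType}. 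The occupation density formula in the conclusion transfers from the corresponding combinatorial identity at each finite $p$ between contour time at a level and population count at that level, which passes to the limit once convergence of the density field is established.

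The principal obstacle is passing to the limit in the nonlinear integrand $\{\delta_j + \sum_{i\ne j}\alpha_{i,j}\LL^{\J^j_s}_\infty(\cev{\HH}^i)\}^{-1}\,d\ell_s^j$ defining $\J^j$. Terminal local times are not in general continuous functionals of c\`adl\`ag paths under the Skorokhod topology, so some care is required. The saving feature is that in the limit each $\HH^i$ is continuous and each $\J^i$ is continuous and non-decreasing, so $\cev{\HH}^i$ is continuous. One can then use the $L^1$-characterization \eqref{eqn:ltLim} together with weak convergence of the occupation measures of $\cev{\HH}^{i,p}$ to transfer convergence to the local time field $v\mapsto \LL^v_\infty(\cev{\HH}^i)$, at which point standard arguments in the spirit of Abraham--Mazliak \cite{AM_BPofBM} allow one to pass to the limit in the Stieltjes integral driving $\J^{j,p}$, thereby identifying the limit as a weak solution to \eqref{eqn:HSDE}. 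The only additional subtlety is controlling the exceptional set of levels $v$ at which $\LL^v_\infty(\cev{\HH}^i)$ could be discontinuous; this would be handled by arguing that for $d\ell_s^j$-almost every $s$, the evaluation point $\J^j_s$ avoids such exceptional levels, exactly as in the one-type setting.
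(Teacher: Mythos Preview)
Your high-level strategy of building a discrete multitype forest and passing to a scaling limit is exactly what the paper does, but the route you take to identify the limit diverges from the paper's at the crucial step, and the ``principal obstacle'' you correctly flag is one the paper never confronts.

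The paper does \emph{not} attempt to pass to the limit in the nonlinear Stieltjes integral defining $\J^j$. Instead it exploits a second, breadth-first, encoding of the same forest. In the discrete picture the drift increments are governed by the process $I^j_\f(h)$ counting type-$j$ roots up to height $h$ (equation \eqref{eqn:iDef}), and the exact combinatorial identity \eqref{eqn:cevHf} rewrites the discrete drift as a first-passage time:
\[
\cev{H}^j_\f(i) = H^j_\f(i) + \inf\bigl\{h\ge 0: I^j_\f(h) > -\underline{D}^j_\f(i)\bigr\}.
\]
The rescaled $I^j_p$ converges to the strictly increasing differentiable process $\U^j_v = x_j + \delta_j v + \sum_{i\ne j}\alpha_{i,j}\C^i_v$ (Lemma~\ref{lem:ulim}), where $\C^i$ is the integral of the MCBI process $\Z^i$ obtained \emph{separately} from the breadth-first time-change of \cite{CPU_affine} (Lemma~\ref{lem:discZ}). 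Since first-passage is continuous at strictly increasing functions \cite{Whitt_WC}, one gets $\J^j_t = (\U^j)^{-1}(\ell^j_t)$ directly. Only \emph{after} the limit is taken does one use calculus of finite variation (\cite[Proposition~0.4.6]{RY}) to recover the integral form \eqref{eqn:HSDE}. The identification $\LL^v_\infty(\cev{\HH}^j) = \Z^j_v$ then comes for free from joint convergence with $Z_p$, not from any Kolmogorov-type tightness of the occupation density field.

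What this buys: the circularity you would face---identifying the limit of $\LL^v_\infty(\cev{\HH}^{i,p})$ requires knowing $\cev{\HH}^i$, which depends on $\J^i$, which depends on that same local time---is broken because the limit of the population profile is pinned down independently by the breadth-first Lamperti-type representation \eqref{eqn:dtc}/\eqref{eqn:Ztc}. Your direct approach might be salvageable, but the tightness of the occupation density via a Kolmogorov criterion and the passage through the nonlinear integrand would require substantial additional estimates that the paper simply does not need.
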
 

From now on, when we discuss solutions to the equation \eqref{eqn:HSDE} we will implicitly assume that are speaking about the solution given by Theorem \ref{thm:existH}. We can state the following corollary to Theorem \ref{thm:existH}, which is just a simple case where we know a family of $\psi_j$ are admissible. Namely, when the height processes are reflected Brownian motions with negative drift.

\begin{cor}\label{cor:bm}
	Suppose that $\delta_j>0$, $x_j\ge 0$ for $j\in[N]$ and $\alpha_{i,j}\ge 0$ for all $i\neq j\in[N]$. We suppose $\alpha_{j,j}\le 0$. Let $B^j_t$ denote independent standard Brownian motions. Then, for any $\beta_j>0$ there exists a solution to \eqref{eqn:HSDE} for the height processes defined by
	\begin{equation*}
		\HH^j_t = \frac{1}{\beta_j} \left(\sqrt{2\beta_j}B^j_t + \alpha_{j,j} t - \inf_{s\le t} \{ \sqrt{2\beta_j}B^j_s +\alpha_{j,j}s\} \right).
	\end{equation*}
\end{cor}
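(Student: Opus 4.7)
The goal is to reduce the corollary to a direct application of Theorem \ref{thm:existH}, so the plan is simply to verify that (i) the underlying Laplace exponents are admissible in the sense of the definition preceding the theorem, and (ii) the processes $\HH^j$ displayed in the corollary really are the $\psi_j$-height processes of an independent family of L\'evy processes satisfying \eqref{eqn:psijs}.

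First I would identify the driving L\'evy processes. Set $\X^j_t := \sqrt{2\beta_j}B^j_t + \alpha_{j,j}t$. Since the $B^j$ are independent standard Brownian motions, the $\X^j$ are independent spectrally positive L\'evy processes, and a direct computation of $\E[\exp(-\lambda\X^j_1)]$ gives the Laplace exponent
\begin{equation*}
	\psi_j(\lambda) = \beta_j\lambda^2 + (-\alpha_{j,j})\lambda.
\end{equation*}
The hypothesis $\alpha_{j,j}\le 0$ is precisely what makes the linear coefficient $-\alpha_{j,j}$ nonnegative, so $\psi_j$ has the form required by \eqref{eqn:psijs} with $\beta_j>0$, drift term $-\alpha_{j,j}\ge 0$, and trivial L\'evy measure $\pi_j\equiv 0$. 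The integrability condition $\int_1^\infty 1/\psi_j(u)\,du<\infty$ holds because $\psi_j(u)\sim \beta_j u^2$ as $u\to\infty$.

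Second, I would invoke the remark following the definition of admissibility: the Lindeberg-Feller CLT produces, for each $j$, a family of (sub)critical offspring laws $\mu^{j,p}$ with $p^{-1}\sum_{\ell=1}^{p\gamma_p}(\xi^{j,p}_\ell - 1)\Rightarrow \X^j_1$ for a suitable scaling sequence $\gamma_p$, and also satisfies $\liminf_p g^{j,p}_{[\delta\gamma_p]}(0)>0$ for every $\delta>0$. Hence the family $(\psi_j)_{j\in[N]}$ is admissible in the sense of the paper.

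Third, I would recognize the stated $\HH^j$ as the $\psi_j$-height process. For the purely Brownian exponent $\psi_j(\lambda)=\beta_j\lambda^2 + (-\alpha_{j,j})\lambda$, the construction of the height process from \cite{DL_Levy} (see their treatment of the Brownian case in Section 1.4) reduces to the classical identity that the reflected process $\X^j_t - \inf_{s\le t}\X^j_s$ is itself (a constant multiple of) the height process, with the normalizing factor $1/\beta_j$ coming from the Brownian variance; this is exactly the formula written in the corollary. With this identification, equation \eqref{eqn:HSDE} is an instance of the system treated in Theorem \ref{thm:existH}, so the existence of a weak solution is immediate.

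The only real place where care is needed is in the identification of $\HH^j$ as a genuine $\psi_j$-height process when the drift $\alpha_{j,j}$ is strictly negative; the coefficient $1/\beta_j$ in front of the reflected process has to match the normalization used in \cite{DL_Levy}, so I would double-check that the local-time limit \eqref{eqn:ltLim} (which gives $\ell^j_t = -\inf_{s\le t}\X^j_s$) is consistent with the definition used there. Once that bookkeeping is confirmed, the corollary is a direct corollary of Theorem \ref{thm:existH}.
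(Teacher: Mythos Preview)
Your proposal is correct and matches the paper's approach exactly: the paper treats Corollary~\ref{cor:bm} as an immediate specialization of Theorem~\ref{thm:existH}, relying on the remark that the Lindeberg--Feller CLT makes $\psi_j(\lambda)=(-\alpha_{j,j})\lambda+\beta_j\lambda^2$ admissible, together with the Brownian identification of the height process as $\HH^j_t=\beta_j^{-1}(\X^j_t-\inf_{s\le t}\X^j_s)$. Your write-up is in fact more explicit than the paper's, which does not spell out the Laplace exponent computation or the verification of \eqref{eqn:psijs}.
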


In order to prove Theorem \ref{thm:existH}, we construct a random multitype forest where the solution arises fairly naturally. The model is discussed in Section \ref{sec:forest}. The forest model also gives rise to a Ray-Knight type theorem, which requires more information on the classification of multitype continuous state branching processes. In lieu of discussing that here, we discuss the Ray-Knight theorem in the context of Corollary \ref{cor:bm} where stochastic calculus can be used and reserve the more general result for Proposition \ref{prop:mcbi}.
\begin{cor}\label{cor:zsde}
	Suppose the hypothesis of Corollary \ref{cor:bm} and for each $j\in[N]$ let $\Z_v^j = \LL_\infty^v(\cev{\HH}^j)$. Then $\Z = (\Z^1,\dotsm, \Z^N)$ is a weak solution to the system stochastic differential equations
	\begin{equation*}
		d\Z_v^j = \sqrt{2\beta_j\Z_v^j}\,dW^j_v + \left\{\delta_j + \sum_{i=1}^N \alpha_{i,j} \Z^i_v \right\}\,dv,\qquad \Z^j_0 = x_j,
	\end{equation*} where $(W^1,\dots, W^N)$ is an $\R^N$-valued Brownian motion. 
\end{cor}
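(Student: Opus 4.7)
The plan is to apply the Itô--Tanaka formula to the continuous semimartingale $\cev{\HH}^j$ and read off the dynamics of $\Z^j_v=\LL^v_\infty(\cev{\HH}^j)$ as a process in $v$. In the Brownian case one has $\beta_j\HH^j_t=\sqrt{2\beta_j}B^j_t+\alpha_{j,j}t+\ell^j_t$, and since $\J^j$ is continuous and of finite variation, $\cev{\HH}^j_t=\HH^j_t+\J^j_t$ is a continuous semimartingale with $d[\cev{\HH}^j]_t=(2/\beta_j)\,dt$; comparing the occupation density in Theorem~\ref{thm:existH} with the Revuz--Yor occupation formula identifies $\LL^v$ with $\beta_j/2$ times the standard semimartingale local time, so Tanaka reads $(\cev{\HH}^j_t-v)^+=\int_0^t 1_{[\cev{\HH}^j_s>v]}\,d\cev{\HH}^j_s+(1/\beta_j)\LL^v_t(\cev{\HH}^j)$.

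For $u>v>0$ I would introduce $T_u=\inf\{t:\J^j_t>u\}$. Because $\J^j$ grows only on $\{\HH^j=0\}$, at $T_u$ we have $\HH^j_{T_u}=0$ and $\cev{\HH}^j_{T_u}=u$, and for $t\ge T_u$ the floor $\J^j_t\ge u$ forces $\cev{\HH}^j_t\ge u$, so $\LL^v_{T_u}(\cev{\HH}^j)=\Z^j_v$ for every $v<u$. Evaluating the Tanaka identity at $T_u$ and differencing at two levels $v<v'<u$ gives an expression for $\Z^j_{v'}-\Z^j_v$ as a stochastic integral against $d\cev{\HH}^j$. The drift terms are identified as follows: (i) the $d\ell^j_s$ and $d\J^j_s$ pieces are supported on $\{\HH^j_s=0\}$, where $\cev{\HH}^j_s=\J^j_s$, so the indicator restricts them to $s\in(T_v,T_{v'}]$; \eqref{eqn:HSDE} together with the change of variables $w=\J^j_s$ then converts $\ell^j_{T_{v'}}-\ell^j_{T_v}$ into $\int_v^{v'}(\delta_j+\sum_{i\ne j}\alpha_{i,j}\Z^i_w)\,dw$, while the $d\J^j$ piece contributes $v'-v$ and cancels a boundary term; (ii) the $ds$ integral reduces to $\alpha_{j,j}\int_v^{v'}\Z^j_w\,dw$ by the occupation density formula combined with $\LL^w_{T_u}=\Z^j_w$ for $w<u$. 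The surviving identity is
$\Z^j_{v'}-\Z^j_v=\sqrt{2\beta_j}\int_0^{T_u}1_{[v<\cev{\HH}^j_s\le v']}\,dB^j_s+\int_v^{v'}\!\bigl(\delta_j+\sum_{i=1}^N\alpha_{i,j}\Z^i_w\bigr)\,dw$.

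The last step is to recognize $M^j_v:=\Z^j_v-x_j-\int_0^v(\delta_j+\sum_{i=1}^N\alpha_{i,j}\Z^i_w)\,dw$ as a continuous martingale in an appropriate backward-in-$t$ filtration built from the excursions of $\HH^j$, with quadratic variation $d[M^j]_v=2\beta_j\Z^j_v\,dv$ computed from the It\^o isometry and the occupation density identity. Since the $B^j$ are independent, $[M^i,M^j]\equiv 0$ for $i\ne j$, and a Dambis--Dubins--Schwarz representation produces independent standard Brownian motions $W^1,\dots,W^N$ with $dM^j_v=\sqrt{2\beta_j\Z^j_v}\,dW^j_v$, yielding the asserted SDE system; the initial value $\Z^j_0=x_j$ follows from $\cev{\HH}^j$ agreeing with $\HH^j$ on $[0,\tau^j_{x_j}]$. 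The principal obstacle is identifying the correct filtration on the $v$-axis so that the $v$-parameterized martingales and the DDS representation are rigorously justified, and a secondary obstacle is carefully reconciling the paper's local time convention with the Tanaka convention (the two differ by a factor of $2/\beta_j$ in the Brownian case).
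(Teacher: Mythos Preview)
Your approach is genuinely different from the paper's, and it is worth spelling out the contrast.

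The paper does not touch Tanaka's formula at all. Instead it argues in two short moves. First, Proposition~\ref{prop:mcbi} (which is a byproduct of the weak convergence in Theorem~\ref{thm:cevHconv} and Lemma~\ref{lem:discZ}) says that $\Z=(\Z^1,\dotsc,\Z^N)$ already satisfies the Lamperti-type representation
\[
\Z^j_v = x_j + \sum_{i=1}^N \X^{i,j}_{\C^i_v} + \Y^j_v,\qquad \C^j_v=\int_0^v \Z^j_s\,ds,
\]
with $\X^{i,j}_t=\alpha_{i,j}t$ for $i\ne j$, $\Y^j_t=\delta_j t$, and $\X^{j,j}_t=\sqrt{2\beta_j}B^j_t+\alpha_{j,j}t$. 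Second, one applies Dambis--Dubins--Schwarz to the time-changed Brownian motion $B^j\circ\C^j$, obtaining $\sqrt{2\beta_j}\,B^j_{\C^j_v}=\int_0^v\sqrt{2\beta_j\Z^j_s}\,dW^j_s$; the linear pieces give the drift, and the corollary follows. The filtration issue you flag never arises, because the $v$-indexed process $\Z$ comes with a natural filtration through $\C^j_v$, and DDS is applied in the ordinary time direction.

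Your route is the classical Ray--Knight strategy: Tanaka on the semimartingale $\cev\HH^j$, freeze the local time via the stopping times $T_u$, and read off an SDE in the level variable. The calculations you outline (the normalization $\LL^v=(\beta_j/2)L^v$, the identification of the $d\ell^j$ and $d\J^j$ contributions on $\{\HH^j=0\}$, the occupation-density reduction of the $ds$ term) are correct. The gain is that this would give a self-contained continuum proof, independent of the discrete approximation machinery. The cost is exactly the obstacle you name: one must exhibit a filtration in $v$ making each $M^j$ a martingale and compatible across the $N$ coordinates, which is delicate here because $\J^j$ depends on the terminal local times $\LL_\infty^{\cdot}(\cev\HH^i)$ of the \emph{other} coordinates. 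The paper sidesteps this entirely by importing the Lamperti representation from the discrete limit; your approach would need, e.g., an excursion-filtration argument in the spirit of the perturbed-reflecting-Brownian-motion literature to close.
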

\begin{remark}
	We remark that in the Brownian case of Corollary \ref{cor:bm}, the solution $\cev{\HH}^j$ is a semi-martingale with quadratic variation $\langle\cev{\HH}^j\rangle_t = \frac{2t}{\beta_j}$. Hence, if $L_\infty^v(\HH^j)$ is the semi-martingale local time then $L_\infty^v(\cev{\HH}^j) = \frac{\beta_j}{2} \LL^v_\infty(\cev{\HH}^j)$. Thus there is a similar statement to Corollary \ref{cor:zsde} where we replace $\LL$ with $L$. In this situation, we can also replace the constants in equation \eqref{eqn:HSDE} in order to have $\J^j_t$ depend on $L_\infty^{\J^j_s}(\cev{\HH}^i)$.
	
\end{remark}

\subsection{Organization of the Paper}

In Section \ref{sec:forest} we discuss the forest model and establish the notation that will be used throughout the sequel. This section contains within it many identities that happen in the discrete which will be crucial in our later weak convergence arguments. Section \ref{sec:forestRandom} describes the randomization of the model, where the discrete processes defined on forests become well-known discrete stochastic processes.

In Section \ref{sec:bp} we give a brief overview of (multitype) continuous state branching processes along with their connections to L\'evy processes. We do discuss their characterization as affine processes; however, all the results refer to them through the useful time-change found in \cite{CPU_affine}. Section \ref{sec:hp} also gives a brief overview of the $\psi$-height processes, under the assumptions discussed in Section \ref{sec:statementOfResults}. We do point to references where weaker assumptions are made on the Laplace exponents $\psi$.

Section \ref{sec:ws} contains the main weak convergence arguments. We recall several results that follow from the existing literature that will be useful to state explicitly in this paper. Section \ref{subsec:cevH} is devoted to proving Theorem \ref{thm:cevHconv} from which Theorem \ref{thm:existH} follows from simple observations involving calculus of finite variation functions. We then state the general Ray-Knight theorem with Proposition \ref{prop:mcbi}. In Section \ref{sec:con}, we show how Corollary \ref{cor:bm} follows from Theorem \ref{thm:existH} and how Corollary \ref{cor:zsde} follows from Proposition \ref{prop:mcbi}.

The weak convergence arguments presented in Section \ref{sec:ws} are relatively standard in the random tree literature. Apart from some arguments dealing with time-changes and first passage times, the restriction to admissible $(\psi_j)$ simplifies much of the more delicate arguments. In particular this restriction allows us to use the weak convergence results presented in \cite[Chapter 2]{DL_Levy}. Therefore, the forests description presented in Section \ref{sec:forest} along with the many discrete identities are a large part of the sequel.



\section{Descriptions of Forests}\label{sec:forest}

In this section we describe how our discrete forests are constructed, and what processes on the forest we define. A forest, say $\f$, will be both rooted and colored by the \textit{colors} or \textit{types} $0,1,\dotsm,N$. The type zero vertices will be of a special kind when we randomize the model. They keep track of the immigration terms. For the colors $1,\dotsm, N$ we will define two separate labelings on the vertices corresponding to a breadth-first ordering and a depth-first ordering. Many of the processes we define are described for 1 type by Duquesne in \cite{Duquesne_CRTI} using a similar forest model.

\subsection{Basic Definitions}

We define a forest $\f$ as a locally finite graph on some (possibly infinite) vertex set $V\subset \N$ which has a finite number of connected components which are themselves rooted planar trees. Recall locally finite means that each vertex has finitely many adjacent vertices. We note that the vertex set $V$ is a subset of the natural number. Therefore, the vertices have some ordering inherited from the natural numbers. To distinguish this ordering from the ordering on the natural numbers we write $\prec$ instead of $<$, i.e for $v,w\in \f$, we say $v\prec w$ if, as natural numbers, $v<w$. A priori, this has no special significance for the forests.

Each connected component is equipped with the graph distance. In each of the connected components, say $\T_1,\dots, \T_M$, there will be a distinguished vertex $\rho_j\in\T_j$ called the root of the tree. 

The height of a vertex $v\in\T_j$ is defined as the distance to the root $\rho_j$, and the height of $v$ is denoted by either $\h(v;\f)$ or $\h(v;\T_j)$ which, depending on the context, should be clear. We also assume that the labeling of the vertices by elements of $\N$ obeys the following ordering property, which is possible by the local finiteness condition:
\begin{enumerate}
	\item[(O1)] If $\h(v;\f)<\h(w;\f)$ then $v\prec w$. 
\end{enumerate} The condition (O1) roughly tells that the ordering $\prec$ describes a breadth-first ordering of the vertices of the trees. In order to have the breadth-first ordering exactly correspond to the ordering $\prec$, we would need a way to order vertices at the same height. As it will turn out, when we randomize our model the exchangeability of the random variables makes specifying the ordering with that specificity irrelevant. 

 For each vertex $v\in \T_j\setminus\{\rho_j\}$, there is a unique adjacent vertex $w$ which lies on the unique geodesic connecting $v$ to $\rho_j$. We call this vertex $w$ the parent of $v$, and is denoted by $\pi(v) = w$. The vertex $v$ is called a child of $w$. 

Next there will be some coloring of the vertices of $\f$, which is just some map $\cl:\f\to \{0,1,\dots,N\}$ subject to the three conditions:
\begin{enumerate}
	\item[(C1)] If $\cl(v) = 0$ then $\cl(\pi(v)) = 0$ or $v$ is a root. 
	\item[(C2)] For each $j\in \{1,\dotsm,N\}$ the connected components of the subgraph induced by the vertices $\cl^{-1}(j) = \{v\in \f: \cl(v) = j\}$ are finite. 
	\item[(C3)] If $v$ and $w$ are such that $0<\cl(v)<\cl(w)$ and $\pi(v) = \pi(w)$ then $v\prec w$. If $\cl(w) = 0$ and $\cl(v)\neq 0$ with $\pi(v) = \pi(w)$ then $v\prec w$ as well. 
\end{enumerate} 

Forests described above equipped with a coloring function satisfying (C1-C3) will be called \textit{colored forests}. We briefly describe what these conditions mean. Condition (C1) tells us that the vertices of color 0, have parents of color 0. In Figure \ref{fig:bfalllabeling}, this means that the non-root red vertices have parents which are also red. Condition (C2) states that the subtrees of $\f$ where all vertices are of a type $j\neq 0$ must be finite. This will be vital when describing the height processes and corresponds to a (sub-)criticality assumption on a branching mechanism. Condition (C3) just guarantees the children of a vertex have some relationship to the ordering of all vertices on the tree. Moreover, conditions (C3) implies that there is some breadth-first manner to which the vertices are labeled. See Figure \ref{fig:bfalllabeling} for an example of a colored forest with 3 colors. 

\begin{figure}
	\centering
	\includegraphics[width=\linewidth]{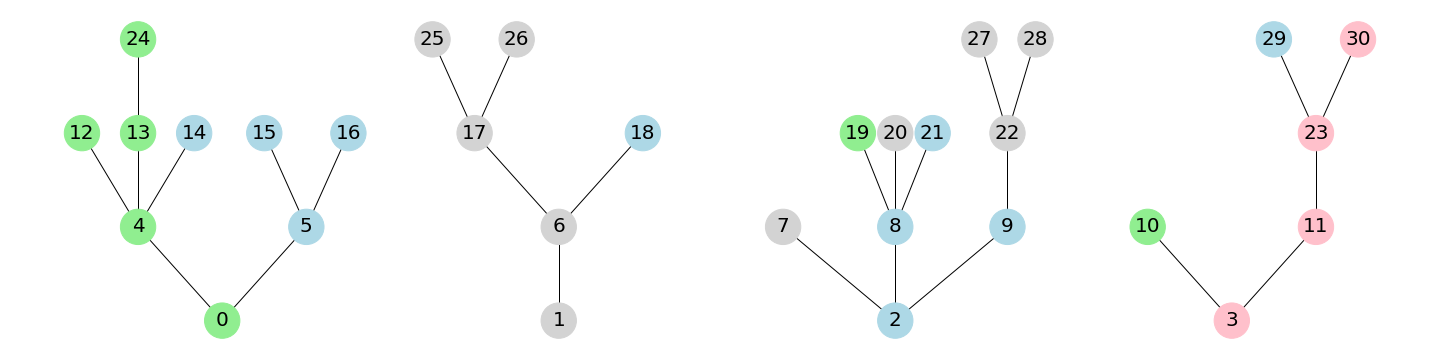}
	\caption{A colored forest with 31 vertices and 3 non-zero colors. Color 1 is depicted in green, color 2 is depicted in grey, color 3 is depicted in blue and color 4 is depicted in red. The vertices are labeled in a with their corresponding integers. The $j$-roots $v^1_0,\dotsm$ depicted the roots of trees of type 1, are labeled $v^1_0 = 0$, $v^1_1 = 10$ and $v_2^1 = 19$.}
	\label{fig:bfalllabeling}
\end{figure}

For each vertex $v$ we define $\chi^j(v)$ as the number of type $j$ children of vertex $v$. 

Now consider a colored forest $\f$. Consider the connected components of $\cl^{-1}(j)$ which will be written as $\tFrak^j_0,\tFrak^j_1,\dotsm$. These connected components are trees, and are finite by assumption (C2). Moreover, for each component $\tFrak^j_\ell$ there is some unique vertex $v_\ell^j$ of minimal height, else we would be able to construct a cycle in the forest $\f$. We say that these vertices $v^j_\ell$'s are the $j$-roots of $\f$, and that $v^j_\ell$ is the root of $\tFrak^j_\ell$. We assume that we have indexed these vertices $v^j_\ell$ in a breadth-first manner so that if $\ell<m$ then $v_\ell^j \prec v_m^j$. This can be done since the forest $\f$ is locally finite, and so there are only a finite number of vertices in $\f$ of height at most $h$ for any $h\ge 0$.

Lastly, since $\tFrak^j_\ell$ are finite we can describe a depth-first ordering of the tree in the obvious way. For $\#\tFrak_\ell^j = n$ the depth-first ordering of $\tFrak_\ell^j$ is $w_0,\dots, w_{n-1}$ where $w_0 = v^j_\ell$ and given $w_1,\dots, w_m$ the vertex $w_{m+1}$ is 
\begin{itemize}
	\item[$\,$] The least (in terms of the order $\prec$) child of $w_m$ if any; else
	\item[$\,$] the least unexamined child of $\pi(w_m)$ if any; else
	\item[] the least unexamined child of $\pi(\pi(w_m))$ if any; else,
	\item[] and so on.
\end{itemize} With this we can describe the $j^\text{th}$ depth-first
ordering of the entire forest $\f$, which orders all type $j$ vertices of $\f$. Given two type $j$ vertices $v,w$ we say $v\prej w$ if
\begin{itemize}
	\item[] $v,w\in \tFrak^j_\ell$ and $v$ appears before $w$ in the depth-first ordering of $\tFrak^j_\ell$; or
	\item[] $v\in \tFrak^j_\ell$ and $w\in\tFrak^j_m$ with $\ell<m$. 
\end{itemize} Since the trees are locally finite and each tree $\tFrak^j_\ell$ has finite cardinality, we can enumerate all type $j$ vertices in $\f$ by $w^j_0,w^j_1,\dotsm$ where $w^j_\ell\prej w^j_m$ iff $\ell<m$. This means that the order of these trees appear is in a breadth-first manner; however, within these trees $\tFrak^j_\ell$, the vertices are labeled in a depth-first manner. See Figure \ref{fig:dforderingall} for a depiction of this ordering.

\begin{figure}
	\centering
	\includegraphics[width=\linewidth]{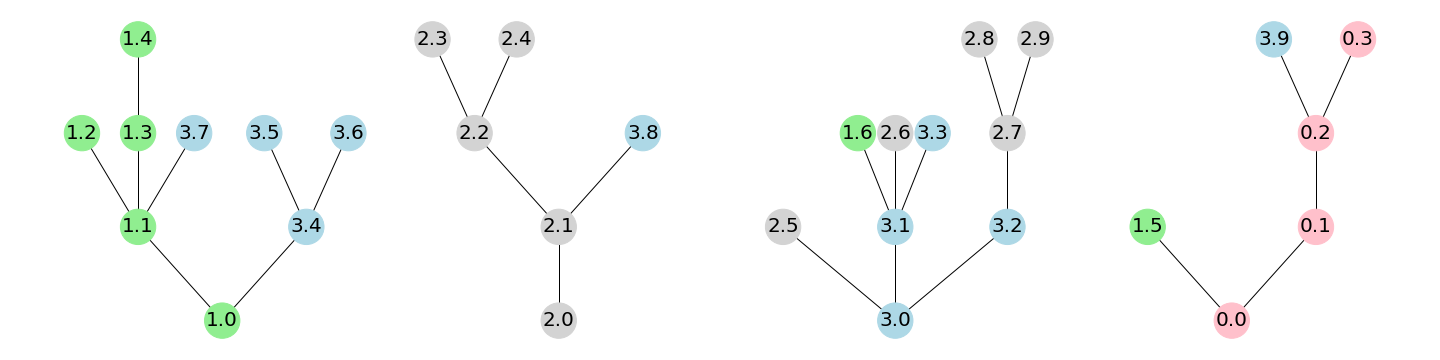}
	\caption{A depiction of the depth-first labeling for each type. Instead of writing $w^j_i$ we write $j.i$. Note that we explore the all the type 1 (in light green) vertices which are in a connected component of $\cl^{-1}(1)$ containing vertex $0$ (now labeled 1.0) before we move to vertex 10 in Figure \ref{fig:bfalllabeling} (labeled 1.5 above).}
	\label{fig:dforderingall}
\end{figure}

\subsection{Processes on Forests}

Given a finite tree $\tFrak$, with root $\rho$ and depth-first ordering $(w_j; 0\le j<\#\tFrak)$, we define the height process of $\tFrak$ as
$$
H_\tFrak(n) = \operatorname{dist}(w_n,\rho).
$$ Clearly, $H_\tFrak$ uniquely characterizes the tree $\tFrak$. 

We also encode the information of the tree $\tFrak$ in another way, called the {\L}ukasiewicz path of $\tFrak$. If let $\chi(w)$ denote the number of children of a vertex $w$. The {\L}ukasiewicz path of a tree $\tFrak$ is denoted $(D_\tFrak(k);k=0,\dots, \#\tFrak)$ and is defined as 
\begin{equation*}
	D_\tFrak(0) = 0,\qquad D_\tFrak(k+1) = D_\tFrak(k)+ \chi(w_k)-1.
\end{equation*} Observe that $\sum_{k=0}^{\#\tFrak-1}\chi(w_k) = \#\tFrak-1$, because both sides count the number of non-root vertices in the tree $\tFrak$. From here, it is not too hard to see that $D_\tFrak(k)>-1$ for all $k=0,\dots, \#\tFrak-1$ and $D_\tFrak(\#\tFrak) = -1$. We recall from \cite{LL_BPLP} without proof that the height process of a tree can be recovered from the {\L}ukasiewicz path by
\begin{equation}\label{eqn:hd}
	H_\tFrak(k) = \#\left\{0\le \ell<k: D_\tFrak(\ell) = \inf_{\ell\le i\le k}D_\tFrak(i)\right\}.
\end{equation}

Now given a colored forest $\f$, we fix a $j\in\{1,2,\dotsm,N\}$. We let $(\tFrak^j_\ell;\ell\ge 1)$ be the connected components of $\cl^{-1}(j)$ constructed in the previous subsection. The roots of the trees $\tFrak^j_\ell$ are denoted by $v^j_\ell$ and the $j^\text{th}$ depth-first ordering of all vertices in $\cl^{-1}(j)$ is given by $w^j_0,w^j_1,\dotsm$. We define the \textit{$j^\text{th}$ height process} of the forest $\f$ as the process $H^j_\f = (H^j_\f(i); \ell\ge 0)$ where 
\begin{equation*}
	H^j_\f(i) = \text{dist}(w_i^j, v^j_\ell),\quad \text{and}\quad \text{ when }w_i^j\in \tFrak^j_\ell.
\end{equation*} We also define the \textit{$j^\text{th}$ {\L}ukasiewicz path} of $\f$ as the process $D^j_\f = (D^j_\f(i); i\ge 0)$ where $$
D^j_\f(0) = 0\qquad D^j_\f(k+1) = D^j_\f(k)+\chi^j(w^j_k)-1,
$$ where, as we recall, $\chi^j(w)$ is the number of type-$j$ children of the vertex $w$. The process $H^j_\f$ is simply the concatenation of all of $H_{\tFrak^j_\ell}$, while the excursions of $D_\f^j$ above its running minimum are the paths $D_{\tFrak^j_\ell}$. Indeed, if $n_p^j = \#\tFrak^j_0+\dotsm+ \#\tFrak_p^j$, then it is easy to see that for $i<\#\tFrak^j_{p+1}$ then
$$
H^j_\f(n_p^j+i) = H_{\tFrak^j_{p+1}}(i),\qquad D_\f^j(n_p^j+i) = D_{\tFrak^j_{p+1}}(i)-(p+1).
$$ From here it follows that \eqref{eqn:hd} remains true with $H_\tFrak$ (resp. $D_\tFrak$) replaced by $H^j_\f$ (resp. $D^j_\f$). We call the vector-valued processes $H_\f = (H^1_\f,\dotsm,H^N_\f)$ the \textit{height process} of $\f$ and $D_\f:=(D^1_\f,\dotsm,D^N_\f)$ the \textit{{\L}ukasiewicz path} of $\f$.

We next define the \textit{height profile} of a colored forest $\f$. This is the $\mathbb{Z}^N$-valued process $Z_\f= (Z^1_\f,\dotsm, Z^N_\f)$ defined by
\begin{equation*}
	Z^j_\f(h) = \{v\in \f: \cl(v) = j, \text{ and }\h(v;\f) = h\}.
\end{equation*} We also wish to keep track of who the parents of these type $j$ vertices are. We keep track of the vertices of type $j$ whose parents are of type $i$ by defining the \textit{$(i\to j)$-height profile} for $i\in\{0,1,\dotsm,N\}$ and $j\in \{1,\dotsm, N\}$ as the process $Z^{i\to j}_\f:=(Z^{i\to j}_\f(h);h\ge0)$ defined by
\begin{equation*}
	Z^{i\to j}_\f(h) = \{v\in \f: \cl(v) = j, \cl(\pi(v)) = i, \h(v;\f) = h\}, 
\end{equation*} with the understanding that if $v$ is a root of $\f$, then $\cl(\pi(v)) = 0$. Thus, we can see for each $j\in\{1,\dotsm, N\}$ and $h\ge 0$
\begin{equation*}
	Z^{j}_\f(h) = \sum_{i=0}^N Z^{i\to j}_\f(h),
\end{equation*} since each type $j$ individual has a parent of some type $i$.

We now define the \textit{cumulative height profiles} as the processes $C^{i\to j}_\f = (C^{i\to j}_\f(h);h\ge 0)$ and $C^j_\f = (C^j_\f(h);h\ge0)$ by
\begin{equation*}
	C^{i\to j}_\f(h) = \sum_{m=0}^h Z^{i\to j}_\f(m),\qquad C^j_\f(h) = \sum_{i=0}^N C^{i\to j}_\f(h) = \sum_{m=0}^h Z^j_\f(m).
\end{equation*}

We also introduce a process which counts the number of type $j$-vertices whose parent is not of type $j$ which have height at most $h$. We denote this by $I^j_\f = (I^j_\f(h);h\ge 0)$. We can see that as \begin{equation}\label{eqn:iDef}
	I^j_\f(h) =\sum_{i\neq j} C^{i\to j}_\f(h).
\end{equation} This process is a convenient way to keep track of all the trees $\tFrak_\ell^j$, which are connected components of the subgraph induced by $\cl^{-1}(j)$ whose root $v^j_\ell$ has height $\h(v^j_\ell)\le h$. Indeed, all of these vertices $v^j_\ell$ are type $j$-vertices whose parent $\pi(v^j_\ell)$ is not of type $j$ and that, moreover, all such vertices correspond to some $v^j_\ell$. Therefore, $\{v^j_\ell; \ell = 0,1,\dotsm, I^j_\f(h)-1\}$ are all the $j$-roots of height at most $h$.

Altering some of the descriptions and definitions in \cite[Pages 111-112]{Duquesne_CRTI} we define the \textit{$j^\text{th}$ left height process} $\cev{H}^j_\f = (\cev{H}^{j}_\f(i);i\ge0)$ by
\begin{equation*}
	\cev{H}^j_\f(i) = \h(w^j_i;\f).
\end{equation*}
We can see that if $w^j_i\in \tFrak^j_\ell$, then the unique path of minimal distance from $w^j_i$ to any root in the forest $\f$ must contain the vertex $v^j_\ell$, and hence we get $$
\cev{H}^j_\f(i) = H^j_\f(i)+\h(v^j_\ell;\f).
$$ 
Since the vertices $v_\ell^j$ for $\ell\in \{0,1,\dotsm, I_\f^{j} (h)-1\}$ enumerate all type-$j$ roots in $\f$ of height at most $h$, we get
\begin{equation*}
	\h(v_\ell^j;\f) = \inf\{h\ge 0: I_\f^j(h)>\ell\}.
\end{equation*}

Next, the $\ell$ for which $w^j_i\in \tFrak^j_\ell$ can be found from the {\L}ukasiewicz path via
\begin{equation*}
	\ell = -\underline{D}^j_\f(i):= - \inf\{D_\f^j(m); m\le i\}.
\end{equation*} Indeed, $\underline{D}_\f^j$ is simply the running minimum of $D_\f^j$ and decrements by 1 each time a tree $\tFrak_\ell^j$ is finished being explored. Hence
\begin{equation}\label{eqn:cevHf}
	\cev{H}^j_\f(i) = H^j_\f(i)+ \inf\{ h\ge 0: I^j_\f(h)> -\underline{D}^j_\f(i)\}.
\end{equation}

The last thing we describe is the breadth-first children functions for the forest $\f$. We first, let $(\bar{w}^i_\ell;\ell\ge 0)$ denote the breadth-first labeling of all vertices of type $i$. That is $\bar{w}_\ell^i$ is a labeling of all type $i$ vertices in $\f$ in a way which preserves the ordering $\prec$: $\bar{w}_\ell^i \prec \bar{w}_m^i$ if and only if $\ell<m$. We define the \textit{breadth-first $i\to j$ children function} by
\begin{equation}\label{eqn:Xijf}
	X^{i,j}_\f(\ell) = \sum_{m=0}^{\ell-1} (\chi^{j}(\bar{w}^i_m) - 1_{[i=j]})\qquad i,j\neq 0
\end{equation}
and 
\begin{equation}\label{eqn:Yjf}
	Y^{j}_\f(\ell) = \sum_{m=0}^{\ell} \chi^{j}(\bar{w}^0_m),\qquad i = 0.
\end{equation} We now observe, from \cite[pg. 1282-1283]{CPU_affine}, that $Z_\f$ is the discrete solution to
\begin{equation}\label{eqn:dtc}
	Z^j_\f(h+1) = Z^j_\f(0)  + \sum_{i=1}^N X^{i,j}_\f\circ C^i_\f(h) + Y^j_\f(h),\qquad C_\f^j(h) = \sum_{\ell=0}^h Z^j_\f(\ell).
\end{equation} Indeed, this can be seen by backwards induction $h$:
\begin{equation*}
	\begin{split}
		Z_\f^j(h+1) &= \bigg(\sum_{i = 1}^N \sum_{\substack{\text{type }i\\\text{ individuals, }v,\\\text{at height }h}} \chi^j(v) \bigg)+ \chi^j(\bar{w}^0_h)\\
		& = Z_\f^j(h) + \bigg(\sum_{i = 1}^N \sum_{\substack{\text{type }i\\\text{ individuals, }v,\\\text{at height }h}} (\chi^j(v) -1_{[i=j]})\bigg)+ \chi^j(\bar{w}^0_h)\\
		&\qquad\qquad\qquad \vdots\\
		&= Z_\f^j(0) + \bigg( \sum_{i=1}^N \sum_{\substack{\text{type }i\\\text{ individuals, }v,\\\text{at height }\le h}} (\chi^j(v) -1_{[i=j]})\bigg) + \sum_{m = 0}^h\chi^j(\bar{w}^0_m)\\
		&= Z_\f^j(0) + \sum_{i=1}^N X_\f^{i,j}\circ C_\f^{i}(h) +Y^j_\f(h),
	\end{split}
\end{equation*} where last equality follows from the observation that there are $C^i_\f(h)$ many type $i$ individuals at height at most $h$, and each of these is labeled by $w_0^i,\dotsm, w_m^i$ where $m = C^i_\f(h)-1$.

\subsection{Randomizing the Model}\label{sec:forestRandom}

We now introduce some randomization into the model described above. We first fix $N(N+1)$ probability measures on $\N_0 = \{0,1,\dotsm\}$, which, a priori, have no assumptions. These measures are labeled $\mu^{i,j}$ and $\nu^j$ for $i,j\in \{1,\dotsm,N\}$. We describe how to construct a colored forest $\f$, by describing the roots and then growing the forest layer-by-layer. We won't focus on the labeling of vertices by elements of $\N$.

We start by fixing a vector $\vec{k} = (1,k_1,\dotsm, k_n)$. This will describe the roots of our forest. For ease of notation, we let $\f_{-1} = \emptyset$. 
\begin{enumerate}
	\item There will be $k_j$ roots of color $j$ at height 0, and 1 root of color 0 at height 0, labeled subject to (O1). Call this $\f_0$
	\item For each $h\ge 0$, and $w\in \f_{h}\setminus \f_{h-1}$ of color $i$ generate, for each $j\in [N]$, independent random variables $\chi^{j}(w)$ with distribution $\mu^{i,j}$ (when $i\neq 0$) or $\nu^j$ (when $i=0$). If $i= 0$ then generate 1 vertex of type 0 as well. 
	
	\item For all $j\in [N]$ and $w\in \f_{h}\setminus \f_{h-1}$. At height $h+1$ add $\chi^j(w)$ children of type $j$ and parent $w$, labeled subject to (C3). Call the resulting forest $\f_{h+1}$.
	\item Continue this process ad infinitum. 
\end{enumerate}
The resulting random colored forest will be defined as $\f= \cup_{h\ge 0} \f_h$. 

We call the resulting forest a multitype Galton-Watson immigration forest with offspring distributions $\boldsymbol{\mu} = (\mu^{i,j}; i,j\in [N])$ and immigration $\boldsymbol{\nu} = (\nu^j; j\in [N])$ started from $\vec{k}$ individuals, which is abbreviated $\GWI_{\vec{k}}(\boldsymbol{\mu},\boldsymbol{\nu})$. For more information on multitype Galton Watson processes see \cite{W_MCBI, DFS_Affine, CPU_affine} and references therein. We can easily see that the height profile $Z_\f$ of $\f$ is a multitype Galton-Watson process with immigration. Indeed, if $(\xi^{i,j,h}_\ell; h \ge 0, \ell \ge 1)$ are i.i.d. with common distribution $\mu^{i,j}$ and $(\eta^{j}_h; h\ge 0)$ are i.i.d. with common distribution $\nu$, then conditionally on $Z_\f(h) = (z_1,\dotsm,z_N)$ we have
$$
Z^j_\f(h+1) =\eta^j_h+ \sum_{i=1}^N \sum_{\ell=1}^{z_i} \xi^{i,j,h}_\ell.
$$

We make the following crucial observation, for any $\GWI_{\vec{k}}(\boldsymbol{\mu},\boldsymbol{\nu})$ forest then
\begin{equation*}
	\left(D_\f^j(m); m\ge 0 \right) \overset{d}{=} \left(X^{j,j}_\f(m);m\ge 0  \right).
\end{equation*}
Indeed, this follows from the observation that both $(\chi^j(\bar{w}^j_m);m\ge 0)$ and $(\chi^j(w^j_m);m\ge0)$ are both sequences of independent random variables with common distribution $\mu^{j,j}$.

As can be seen from equations \eqref{eqn:Xijf} and \eqref{eqn:Yjf}, the processes $X_\f^{i,j}$ and $Y^j_\f$ are random walks the the solutions $Z_\f^j$ solves (the now stochastic) equation \eqref{eqn:dtc}.

\section{Overview of Branching Processes and Height Processes}\label{sec:bp}

\subsection{Continuous State Branching Processes}
Continuous state branching processes with or without immigration are an object of much study. Continuous state branching (CB for short) processes are Feller processes on $[0,\infty]$ with cemetery states of $0$ and $\infty$. An immigration component can be added to obtain a continuous state branching process with immigration (CBI for short) which is a Feller process on $[0,\infty]$ with only $\infty$ as the only absorbing state, excluding the situation where the immigration rate is 0. Kawazu and Watanabe in \cite{KW_BPI} show that $\CBI$ processes are uniquely determined by their Laplace transforms, i.e. if $\Z$ is a $\CBI$ process then there exists functions $\psi$ and $\phi$, for all $\lambda>0$
\begin{equation*}
	-\log \E_x \left[\exp\{-\lambda \Z_t\} \right] = xu(t,\lambda)+ \int_0^t \phi(u(s,\lambda))\,ds
\end{equation*} where
$u$ is the unique solution to the integral equation
\begin{equation*}
	u(t,\lambda)+ \int_0^t \psi(u(s,\lambda))\,ds = \lambda.
\end{equation*} The function $\psi$ is called the \textit{branching mechanism} and the function $\phi$ is called the \textit{immigration rate}. We say that $\Z$ is a $\CBI(\psi,\phi)$ process for short, and if we wish to specify the starting position we will write $\CBI_x(\psi,\phi)$. The functions $\psi$ and $\phi$ must satisfy \cite{KW_BPI,S_LT}
\begin{equation}\label{eqn:pp1}
	\begin{array}{l}
		\displaystyle\psi(\lambda) = -\kappa +\alpha \lambda + \beta \lambda^2 + \int_{(0,\infty)} \left(e^{-\lambda r} - 1 + \lambda r 1_{[r<1]} \right)\,\pi(dr)\\
		\displaystyle\phi(\lambda) = \kappa' + \alpha' \lambda - \int_{(0,\infty)} \left(e^{-\lambda r}-1 \right)\, \bar{\pi}(dr)
	\end{array},
\end{equation}
where $\kappa, \beta, \kappa',\alpha' \ge 0$, $\alpha \in \R$, $\pi$ and $\bar{\pi}$ are Radon measures on $(0,\infty)$ with $\displaystyle\int_{(0,\infty)} (1\wedge r^2)\,\pi(dr)<\infty$ and $\displaystyle\int_{(0,\infty)} (1\wedge r)\,\bar{\pi}(dr)<\infty$. 

The description in \eqref{eqn:pp1} gives a bijective relationship between CBI processes and certain pairs of L\'{e}vy processes. That is, there exists a spectrally positive (i.e. no negative jumps) L\'evy process $\X$ and a subordinator $\Y$ such that for each $\lambda>0$
\begin{equation*}
	\E\left[\exp\left\{-\lambda \X_t \right\}\right] = \exp\left\{t\psi(\lambda) \right\},\qquad
	\E\left[\exp\left\{-\lambda \Y_t \right\}\right] = \exp\left\{-t\phi(\lambda) \right\}.
\end{equation*}
For more information of L\'evy processes, see, for example, Bertoin's monograph \cite{Bertoin_LP}. 

There does exist a path-wise relationship between $\CBI$ processes and these L\'evy processes $\X$ and $\Y$, which is due to Caballero, P\'erez Garmendia and Uribe Bravo in \cite{CGU_CSBPI}. Their work generalized the result accredited to Lamperti \cite{Lamperti_CSBP1}, but first proved by Silverstein in \cite{S_LT}. Given independent $\X$ and $\Y$, there exists a unique c\`adl\`ag solution to the integral equation
\begin{equation*}
	\Z_t = x+\X_{\int_0^t \Z_s\,ds}+\Y_t,
\end{equation*} and, moreover, $\Z$ is a $\CBI_x(\psi,\phi)$ processes. When $\Y$ is zero the process $\X$ is stopped upon hitting level $-x$ and, in this case, the map is invertible.

\subsection{Multi-type Branching Processes}\label{sec:mbp}

There are numerous generalizations of $\CBI$ processes, including allowing immigration from outside sources. We call these new processes multitype continuous state branching processes (resp. with immigration), written as $\MCB$ (resp. $\MCBI$). These were described in a two-dimensional system in \cite{W_MCBI}. A more general picture of multitype branching processes was described in \cite{DFS_Affine} as a particular example of so-called {affine processes}.

An $\R_+^N$-valued Markov process $\Z$ is a multitype continuous state branching process with immigration if, for each $\lambda \in \R_+^N$ and $x\in \R_+^N$ 
\begin{equation*}
	\E_x\left[\exp\left\{ -\langle\lambda,\Z_t\rangle\right\} \right] = \exp\left\{-\langle x,u(t,\lambda)\rangle -\int_0^t \tilde{\phi}(u(s,\lambda))\,ds \right\},
\end{equation*} for a particular function $\phi$ and
where $u = (u_1,\dotsm,u_N)$ is a solution to
\begin{equation*}
	u_i(t,\lambda)+ \int_0^t \tilde{\psi}_i(u(s,\lambda))\,ds = \lambda_i
\end{equation*} for a collection of functions $\tilde{\psi}_i$.

As can be observed from Theorem 2.7 in \cite{DFS_Affine}, there is a bijection between MCBI processes $\Z$ and a collection of $(N+1)$ $\R^N$-valued L\'evy processes $\X^i$, $\Y$ for $i\in [N]$. That is, the functions $\tilde{\psi_i}$ and $\tilde{\phi}$ are related to $\X^i$ and $\Y$ by 
\begin{equation*}
	\E\left[\exp\left\{-\langle\lambda, \X^i_1\rangle \right\} \right] = e^{\tilde{\psi}_i(\lambda)}\qquad \E\left[\exp\left\{-\langle\lambda, \Y_1\rangle \right\} \right] = e^{-\tilde{\phi}(\lambda)}.
\end{equation*}

However, the authors of \cite{DFS_Affine} do not give a path-wise relationship. There is a path-wise representation, due to Caballero, P\'erez Garmendia and Uribe Bravo in \cite{CPU_affine}, which extended the result in a Ph.D. thesis of Gabrielli \cite{Gab_Affine} which had some additional technical assumptions. In the former work, the authors show that given $(N+1)N$ not necessarily independent L\'evy processes $\X^{i,j}$ and $\Y^j$ such that $\X^{j,j}$ is spectrally positive and the rest of subordinators, then there exists a solution $\Z$ to the following initial value problem
\begin{equation}\label{eqn:lampertiGen}
	\Z^j_t = x_j + \sum_{i=1}^N \X^{i,j}({\C^i_t}) + \Y_t^j,\qquad \C^j_t = \int_0^t \Z^j_s\,ds,
\end{equation} where $\X^{i} = (\X^{i,1},\X^{i,2},\dotsm, \X^{i,N})$ and $\Y = (\Y^1,\Y^2,\dotsm,\Y^N)$ are $\R^N$-valued L\'{e}vy processes. As the notation may suggest, the process $\Z_t = (\Z^1_t,\dotsm, \Z^N_t)$ is a multitype continuous state branching process associated with the $(N+1)$ L\'evy processes. The work of \cite{CPU_affine} shows that for any MCBI process, we can find a decomposition of the form \eqref{eqn:lampertiGen}. Observe that \eqref{eqn:lampertiGen} the continuous time analog of equation \eqref{eqn:dtc}.

\subsection{Height Processes}\label{sec:hp}

The continuous time height process $\HH = (\HH_t;t\ge 0)$ is the continuous time analog of equation \eqref{eqn:hd} where the random walk $D_\f$ is replaced with a spectrally positive L\'evy process $\X = (\X_t;t\ge 0)$ with Laplace exponent $(-\psi)$. Such a height process with nice properties exists under restrictions on what type of L\'{e}vy processes we consider.

To get these nice properties, we will assume that $\psi$ satisfies equation \eqref{eqn:psijs}, where $\psi_j$ is replaced with $\psi$ (along with corresponding replacements for $\alpha,\beta,\pi$).
The integral condition guarantees the almost sure extinction of a $\CBI(\psi,0)$ process (see \cite{Grey_explosion}) and implies either $\beta>0$ or $\int_0^1 r\,\pi(dr) = \infty$ and, hence, that $\X$ has paths of infinite variation almost surely. 

The analog of \eqref{eqn:hd} is the $\psi$-height process $\HH$, which is associated with $X$, is defined to give a meaningful measure to the set in \eqref{eqn:ltForH}. See Section 1.2 of \cite{DL_Levy} for a precise construction. We do state, however, that when $\beta>0$ the height process is
\begin{equation*}
	\HH_t = \frac{1}{\beta}\text{Leb}\left\{ \inf_{s\le r\le t}\X_r; s\le t \right\}.
\end{equation*} Moreover, under the conditions on $\psi$ in \eqref{eqn:psijs}, the height process $\HH$ is continuous, see \cite[Theorem 1.4.3]{DL_Levy}.

The Ray-Knight theorem \cite[Theorem 1.4.1]{DL_Levy} for the process $\HH$ is now briefly recalled. The local time $\LL = (\LL^a_t;a,t\ge 0)$ of $\HH$, is defined by the approximation formula (\cite[Proposition 1.3.3]{DL_Levy})
\begin{equation*}
	\lim_{\eps\downto 0} \E\left[\sup_{s\in[0,t]} \left|\frac{1}{\eps}\int_0^s 1_{[a<\HH_s\le a+\eps]} - \LL^a_t\right| \right].
\end{equation*} For $\tau_r = \inf\{t> 0: \X_s > -r\}$, the Ray-Knight theorem states that $\Z = (\Z_a := \LL^a_{\tau_r};a\ge 0)$ is a $\CBI(\psi,0)$ process started from the value $r$. The work was extended by Duquesne \cite{Duquesne_CRTI} and Lambert \cite{Lambert_CBI} to include some immigration mechanism.

\section{The Weak Solution}\label{sec:ws}

In this section we describe how we construct solutions to the stochastic equation in \eqref{eqn:HSDE}, under the assumptions of Theorem \ref{thm:existH}. We will prove the theorem, by appealing to several lemmas which follow from results in the existing literature.

\subsection{Preliminary Lemmas}\label{sec:pl}

We now describe the assumptions we will make on the $\GWI$ forests we analyze. We suppose that $(\bmu_p;p\ge 1)$ and $(\bnu_p;p\ge 1)$ are a sequence of probability measures and $(\gamma_p;p\ge 1)$ are any increasing sequence of non-negative integers. For each $i,j\in [N]$ and $p\ge 1$ we let $(\xi^{i,j,p}_\ell;\ell\ge 0)$ (resp. $(\eta^{j,p}_\ell;\ell\ge 0)$) denote i.i.d. sequences with common distribution $\mu^{i,j}_p$ (resp. $\nu^j_p$). We let $g^{j,p}$ denote the generating function of $\mu^{j,j}_p$ and iteratively define $g_n^{j,p} = g^{j,p}_{n-1}\circ g^{j,p}$ where $g_0^{j,p} = id$. For an real number $x$ let $[x]$ denote the greatest integer smaller than $x$, i.e. the integer part of $x$. We make the following assumptions on these measures:
\begin{itemize}
	\item[(\textbf{A1})]\label{ass:1} Jointly for all $i,j\in [N]$ we have the following convergence in $\D(\R_+,\R)$ 
	\begin{align*}
		\displaystyle\left(\frac{1}{p} \sum_{\ell=0}^{[p\gamma_pt]-1} (\xi^{i,j,p}_\ell - 1_{[i=j]});t\ge0\right) &\Longrightarrow \left(\X^{i,j}_t;t\ge 0\right)\\
		\left(\frac{1}{p} \sum_{\ell=0}^{[\gamma_p t]-1} \eta^{j,p}_\ell;t\ge 0 \right) &\Longrightarrow \left( \Y^{j}_t;t\ge 0\right)
	\end{align*}
	\item[(\textbf{A2})] The processes $\X^{i,j}_t$ and $\Y^j_t$ above are independent L\'evy processes where $\Y^j_t = \delta_j t$ and for $i\neq j$, $\X^{i,j}_t = \alpha_{i,j} t$ for $\delta_j>0$ and $\alpha_{i,j}\ge 0$. The Laplace exponents $\psi_j$ of $\X^{j,j}$ satisfy \eqref{eqn:psijs}.
	\item[(\textbf{A3})] The generating functions satisfy $$\liminf_{p\to \infty}g_{[\delta \gamma_p]}^{j,p}(0)>0\qquad\forall \delta>0.$$
\end{itemize}
If each assumption above is satisfied, we say that $\bmu_p$ and $\bnu_p$ satisfy assumption (\textbf{A}).

We remark that if we are given Laplace exponents $(\psi_j;j\in[N])$ which are admissible then we can construct a sequence of probability measures $\bmu_p$ and $\bnu_p$ which satisfy assumption (\textbf{A}). Indeed, the the existence of measure $(\mu^{j,j}_p;p\ge 1)$ is essentially the content of the the functions $(\psi_j;j\in[N])$ being admissible. To get the remaining measures we can take, for example, $(\xi^{i,j,p}_\ell;\ell\ge 0)$ for $i\neq j$ and $(\eta^{j,p}_\ell)$ to be independent Poisson random variables with appropriate means. 

Throughout the sequel we will write the subscript $p$ for processes on forests as opposed to $\f_p$. For example, we will write $Z^j_p(h)$ as opposed to $Z^j_{\f_p}(h)$. With this new notation, we can state the following lemma. A rigorous proof is omitted since it follows from equation \eqref{eqn:dtc} along with Theorem 2 and Theorem 3 in \cite{CPU_affine}.

\begin{lem}\label{lem:discZ}
	Suppose $\boldsymbol{\mu}_p$ and $\boldsymbol{\nu}_p$ satisfy assumption (\textbf{A}). Suppose that $\vec{k}_p = (1,k_{1,p},\dots,k_{N,p})$ where $k_{j,p}/p\to x_j\ge 0$ as $p\to\infty$. Let $(\f_p;p\ge 1)$ denote a sequence of $\GWI_{\vec{k}_p}(\bmu_p,\bnu_p)$ forests. Then, in the Skorokhod $J_1$ topology on $\D(\R_+,\R^N)$ the following convergence holds 
	\begin{equation}\label{eqn:zconv}
		\left(\frac{1}{p}Z_p([\gamma_pv]); v\ge 0 \right) \Longrightarrow \left(\mathbf{Z}_v; v\ge 0 \right),
	\end{equation} where $\mathbf{Z} = (\mathbf{Z}^1,\dotsm, \mathbf{Z}^N)$ is the unique solution to 
	\begin{equation}\label{eqn:Ztc}
		\Z^j_v = x_j + \sum_{i=1}^N \X^{i,j}_{\C^i_v} + \Y^j_v ,\qquad \C^j_v = \int_0^v \Z^j_s\,ds.
	\end{equation} Moreover, the convergence is joint with the convergence in assumption (\textbf{A1})
\end{lem}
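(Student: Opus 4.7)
The plan is to cast the discrete height profile $Z_p$ as a discrete analog of the multitype Lamperti time change \eqref{eqn:Ztc} and then invoke Theorems 2 and 3 of \cite{CPU_affine}. The starting point is the forest identity \eqref{eqn:dtc}, which for a randomized $\GWI_{\vec{k}_p}(\bmu_p,\bnu_p)$ forest reads
$$Z^j_p(h+1) = k_{j,p} + \sum_{i=1}^N X^{i,j}_p\bigl(C^i_p(h)\bigr) + Y^j_p(h), \qquad C^j_p(h) = \sum_{\ell=0}^h Z^j_p(\ell),$$
where, by the exchangeability observed in Section \ref{sec:forestRandom}, the processes $X^{i,j}_p$ from \eqref{eqn:Xijf} and $Y^j_p$ from \eqref{eqn:Yjf} are honest random walks with step distributions $\mu_p^{i,j}$ and $\nu_p^j$ respectively.

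Rescale by setting $\bar{Z}^j_p(v) := \tfrac{1}{p}Z^j_p([\gamma_pv])$, $\bar{C}^j_p(v) := \tfrac{1}{p\gamma_p}C^j_p([\gamma_pv])$, $\bar{X}^{i,j}_p(t) := \tfrac{1}{p}X^{i,j}_p([p\gamma_pt])$, and $\bar{Y}^j_p(v) := \tfrac{1}{p}Y^j_p([\gamma_pv])$. The discrete recursion becomes, up to $O(1/(p\gamma_p))$ rounding errors from the integer parts,
$$\bar{Z}^j_p(v) = \frac{k_{j,p}}{p} + \sum_{i=1}^N \bar{X}^{i,j}_p\bigl(\bar{C}^i_p(v)\bigr) + \bar{Y}^j_p(v), \qquad \bar{C}^j_p(v) = \int_0^v \bar{Z}^j_p(s)\,ds + o(1),$$
so that $(\bar{Z}_p, \bar{C}_p)$ is a discrete Lamperti solution driven by $\bigl((\bar{X}^{i,j}_p)_{i,j}, (\bar{Y}^j_p)_j\bigr)$. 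Assumption (\textbf{A1}) together with the independence from (\textbf{A2}) gives joint $J_1$-convergence of these drivers to the independent L\'evy processes $(\X^{i,j}, \Y^j)$: for $i\ne j$ the limit is the deterministic drift $\alpha_{i,j}t$, the immigration limit is $\delta_j t$, and $\X^{j,j}$ is spectrally positive with Laplace exponent $\psi_j$ satisfying \eqref{eqn:psijs}.

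At this point the existence and pathwise uniqueness of the continuous Lamperti solution \eqref{eqn:Ztc} is supplied by Theorem 2 of \cite{CPU_affine}, and the continuity of the (multitype) Lamperti map in the appropriate Skorokhod setting is supplied by Theorem 3 of \cite{CPU_affine}. Feeding the joint convergence of the drivers into this stability result yields the joint convergence
$$\Bigl(\bar{Z}_p,\bar{C}_p,(\bar{X}^{i,j}_p),(\bar{Y}^j_p)\Bigr) \Longrightarrow \Bigl(\Z,\C,(\X^{i,j}),(\Y^j)\Bigr),$$
and projecting onto the first block gives \eqref{eqn:zconv} jointly with (\textbf{A1}). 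The main technical obstacle, which is exactly what Theorem 3 of \cite{CPU_affine} is designed to handle, is the discontinuity of composition in the $J_1$ topology at jumps of the inner process; this is resolved because the limit time-change $\C^i_v$ is strictly increasing on $\{\Z^i > 0\}$, so that jumps of $\X^{i,j}$ fall at continuity points of $\C^i$ almost surely. The rounding discrepancy between $\bar{C}^j_p(v)$ and the Riemann integral $\int_0^v \bar{Z}^j_p(s)\,ds$ is uniform on compacts and vanishes under any tightness estimate for $\bar{Z}_p$, so it does not affect the limit.
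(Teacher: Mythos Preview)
Your proposal is correct and follows exactly the route the paper indicates: the paper explicitly states that a rigorous proof is omitted since it follows from equation \eqref{eqn:dtc} together with Theorems 2 and 3 of \cite{CPU_affine}, which is precisely the discrete Lamperti representation plus existence/uniqueness/stability argument you have written out. Your write-up is simply a more detailed unpacking of that sketch.
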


For a function $f:\R_+\to\R$ we write $\underline{f}(t) = \inf_{s\le t} f(s)$. We now observe that the height process for the forest, along with the {\L}ukasiewicz path converge jointly as well. We state this as the following lemma, which is a simple application of Corollary 2.5.1 in \cite{DL_Levy} and using the fact \cite[Equation (1.7)]{DL_Levy} holds in our situation by (\textbf{A3}). 
\begin{lem}\label{lem:dhd}
	Suppose the conditions in Lemma \ref{lem:discZ} on $\boldsymbol{\mu}_p$, $\boldsymbol{\nu}_p$ and $\vec{k}_p$ hold. Then, in the $\D(\R_+,\R^3)$ the following joint convergence holds 
	\begin{equation*}
		\left(\left(\frac{1}{p} D_p^j([p\gamma_pt]), \frac{1}{\gamma_p}H^j_p([p\gamma_pt]), \frac{1}{p}\underline{D}_{p}^j([p\gamma_pt])\right);t\ge 0 \right)\Longrightarrow \left(\left(\tilde{\X}^{j,j}_t ,\HH^j_t, \inf_{s\le t} \tilde{\X}^{j,j}_s \right);t\ge 0\right)
	\end{equation*} where $\tilde{\X}^{j,j} \overset{d}{=} \X^{j,j}$ and $\HH^j_t$ is a $\psi_j$-height process constructed from $\tilde{\X}^{j,j}$.
\end{lem}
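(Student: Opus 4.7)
The plan is to reduce the joint convergence to the single-type convergence theorem of Duquesne--Le Gall by exploiting the discrete identity in \eqref{eqn:hd}, which expresses $H^j_p$ as a deterministic functional of $D^j_p$. The first ingredient is the distributional equality noted at the end of Section \ref{sec:forestRandom}: since each type-$j$ vertex -- whether read in breadth-first or in depth-first order -- independently produces a $\mu^{j,j}_p$-distributed number of type-$j$ children, we have $(D^j_p(m);m\ge 0) \overset{d}{=} (X^{j,j}_p(m);m\ge 0)$, where $X^{j,j}_p$ is the random walk with increments $\xi^{j,j,p}_\ell - 1$. Specializing the first line of assumption (\textbf{A1}) to $i=j$ then yields
\begin{equation*}
\left(\tfrac{1}{p}D^j_p([p\gamma_p t]);t\ge 0\right)\Longrightarrow (\tilde{\X}^{j,j}_t;t\ge 0)
\end{equation*}
in $\D(\R_+,\R)$. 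Because $\tilde{\X}^{j,j}$ is spectrally positive, its running infimum is almost surely continuous, so the continuous mapping theorem applied to the running-infimum functional upgrades this to joint convergence of the first and third coordinates to $(\tilde{\X}^{j,j}_t,\inf_{s\le t}\tilde{\X}^{j,j}_s)$.

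For the height coordinate I would invoke Corollary 2.5.1 of \cite{DL_Levy}. That corollary states that if a sequence of rescaled random walks satisfying (\textbf{A1}) converges to a spectrally positive L\'evy process whose Laplace exponent meets the integrability condition in \eqref{eqn:psijs}, and if the non-triviality condition (1.7) of \cite{DL_Levy} holds, then the discrete height processes built from the random walks via \eqref{eqn:hd} converge jointly with those walks (and with their running minima) to the triple $(\tilde{\X}^{j,j},\HH^j,\inf_{s\le \cdot}\tilde{\X}^{j,j}_s)$, where $\HH^j$ is the $\psi_j$-height process constructed from $\tilde{\X}^{j,j}$. The integrability in \eqref{eqn:psijs} is part of assumption (\textbf{A2}); the non-triviality condition (1.7) of \cite{DL_Levy}, after accounting for the scaling by $p$ and $\gamma_p$, reads precisely as $\liminf_p g^{j,p}_{[\delta\gamma_p]}(0)>0$ for every $\delta>0$, which is exactly assumption (\textbf{A3}).

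The main (and essentially only) conceptual obstacle is the translation between the non-triviality hypothesis (1.7) of \cite{DL_Levy} and our (\textbf{A3}), since these are stated in slightly different normalizations; this amounts to a direct computation with the generating-function iterates $g^{j,p}_n$, using that $g^{j,p}_{[\delta\gamma_p]}(0)$ is the extinction probability by time $[\delta\gamma_p]$ of a single-type Galton--Watson process with offspring law $\mu^{j,j}_p$, and that the scaling $[\delta\gamma_p]$ is precisely the one that corresponds to a macroscopic time horizon of $\delta$ after rescaling time by $p\gamma_p$ and the walk by $1/p$. Once the translation is in place, the three convergences are packaged into a single statement in $\D(\R_+,\R^3)$ by Corollary 2.5.1 of \cite{DL_Levy}, and the lemma is proved.
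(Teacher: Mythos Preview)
Your proposal is correct and follows exactly the route the paper takes: the paper's entire argument is the one-sentence remark preceding the lemma that it is ``a simple application of Corollary 2.5.1 in \cite{DL_Levy} and using the fact \cite[Equation (1.7)]{DL_Levy} holds in our situation by (\textbf{A3}).'' Your write-up simply unpacks this citation, invoking the distributional identity $D^j_p\overset{d}{=}X^{j,j}_p$ from Section~\ref{sec:forestRandom}, the walk convergence from (\textbf{A1}), and the translation of (\textbf{A3}) into condition (1.7); nothing is added or missing relative to the paper.
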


\begin{remark}
	We note that by \cite[Lemma 1.3.2]{DL_Levy} that $-\inf_{s\le t} \tilde{\X}^{j,j}_s = \ell_t^j$ where $\ell^j_t$ is the local time at level 0 and time $t$ of the process $\HH^j$ defined as the $L^1$-limit in \eqref{eqn:ltLim}.
\end{remark}

\subsection{Convergence of the left-height process}\label{subsec:cevH}

We start with a useful convergence lemma, along with an observation on the path-wise behavior of the limiting process.
\begin{lem} \label{lem:ulim} Suppose $\boldsymbol{\mu}_p, \boldsymbol{\nu}_p$ satisfy assumption (\textbf{A}) and the $\vec{k}_{p} = (1,k_{1,p},\dotsm,k_{N,p})$ satisfies $k_{j,p}/p \to x_j\ge 0$ as $p\to\infty$.  Then the following convergence holds, jointly with the convergences in equations \eqref{eqn:zconv} and assumption (\textbf{A1}),
	\begin{equation}\label{eqn:ulim}
		\left(\frac{1}{p} I_p^j([\gamma_pv]);v \ge 0 \right) \Longrightarrow \left(\U_v^j ; v\ge0  \right),
	\end{equation} where 
	$$\U^j_v = x_j+ \sum_{i\neq j} \alpha_{i,j} \C_v^i + \delta_jv,\qquad \C^j_v = \int_0^v \Z^j_s\,ds.	
	$$
	Moreover, the process $\U^j$ is strictly increasing and $\U^j_t\to \infty$ as $t\to\infty$.
\end{lem}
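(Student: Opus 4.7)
\medskip
\noindent\textbf{Proof plan for Lemma \ref{lem:ulim}.}

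The plan is to decompose $I^j_p(h)$ into three parts corresponding to its definition \eqref{eqn:iDef} and analyze each. From the construction of the forest in Section \ref{sec:forestRandom}, for $h\ge 1$ one has the identities
\begin{equation*}
C^{0\to j}_p(h) = k_{j,p} + Y^j_p(h-1), \qquad C^{i\to j}_p(h) = X^{i,j}_p(C^i_p(h-1)) \text{ for } i\neq j, \ i\ge 1,
\end{equation*}
since for $i\neq 0$ the vertex $\bar w^i_m$ ranges over all $C^i_p(h-1)$ type-$i$ vertices of height $\le h-1$, each contributing $\chi^j(\bar w^i_m)$ type-$j$ children at height $\le h$, and the $-1_{[i=j]}$ term in \eqref{eqn:Xijf} vanishes when $i\neq j$. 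Consequently,
\begin{equation*}
I^j_p([\gamma_p v]) = k_{j,p} + Y^j_p([\gamma_p v]-1) + \sum_{\substack{i=1\\ i\neq j}}^{N} X^{i,j}_p\!\bigl(C^i_p([\gamma_p v]-1)\bigr).
\end{equation*}

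First I would handle the two easy terms. Clearly $k_{j,p}/p \to x_j$. For the immigration term, assumption (\textbf{A1}) and (\textbf{A2}) give
\begin{equation*}
\frac{1}{p}Y^j_p([\gamma_p v]) \Longrightarrow \delta_j v, \qquad \frac{1}{p} X^{i,j}_p([p\gamma_p t]) \Longrightarrow \alpha_{i,j} t \quad (i\neq j,\ i\ge 1),
\end{equation*}
and because each limit is deterministic and continuous, the convergences hold uniformly on compacts in probability, jointly with the convergences in (\textbf{A1}) and in Lemma \ref{lem:discZ}.

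The main step is the composed term $X^{i,j}_p\circ C^i_p$. From Lemma \ref{lem:discZ}, $\frac{1}{p}Z^i_p([\gamma_p\cdot]) \Rightarrow \Z^i$ in $\D(\R_+,\R)$, and since $\Z^i$ is a.s.\ c\`adl\`ag and Lebesgue integration is continuous on the Skorokhod space against continuous test functions, a Riemann-sum computation yields
\begin{equation*}
\frac{1}{p\gamma_p} C^i_p([\gamma_p v]) = \frac{1}{\gamma_p}\sum_{m=0}^{[\gamma_p v]}\frac{1}{p}Z^i_p(m)\ \Longrightarrow\ \int_0^v \Z^i_s\,ds = \C^i_v,
\end{equation*}
with the limit being continuous and increasing in $v$, so the convergence is in fact uniform on compacts. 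Combining this with the uniform-on-compacts deterministic convergence $\frac{1}{p}X^{i,j}_p([p\gamma_p t]) \to \alpha_{i,j} t$, a Slutsky-type argument (or a Skorokhod representation followed by pointwise composition with a continuous function) gives
\begin{equation*}
\frac{1}{p} X^{i,j}_p\!\bigl(C^i_p([\gamma_p v]-1)\bigr) \Longrightarrow \alpha_{i,j}\,\C^i_v
\end{equation*}
uniformly on compacts. Adding these contributions yields \eqref{eqn:ulim} with the stated $\U^j_v$, and the joint convergence with (\textbf{A1}) and \eqref{eqn:zconv} follows because every convergence we invoked was joint.

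The final sentence is immediate: since $\delta_j>0$ and each $\C^i_v$ is non-decreasing with $\alpha_{i,j}\ge 0$, the map $v\mapsto \U^j_v$ is continuous and satisfies $\U^j_v - \U^j_u \ge \delta_j(v-u)$ for $u<v$, so $\U^j$ is strictly increasing and $\U^j_v \to\infty$ as $v\to\infty$. The only delicate point is the composition step above, where one must confirm that the time-change $C^i_p$ stays in a compact set for $v$ in a compact set; this is automatic from the convergence $\frac{1}{p\gamma_p}C^i_p([\gamma_p v])\Rightarrow \C^i_v$ and continuity of $\C^i$, so no serious obstruction arises.
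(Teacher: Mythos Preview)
Your argument is correct and follows essentially the same route as the paper: the same decomposition of $I^j_p$ via \eqref{eqn:iDef} and the identity \eqref{eqn:discCtc}, the same Riemann-sum step for $\frac{1}{p\gamma_p}C^i_p([\gamma_p\cdot])\Rightarrow\C^i$, and the same composition argument (the paper cites Billingsley's lemma on p.~151, which is exactly the Skorokhod/Slutsky step you sketch). Your proof that $\U^j$ is strictly increasing via the inequality $\U^j_v-\U^j_u\ge\delta_j(v-u)$ is slightly more direct than the paper's derivative computation, but the two are equivalent.
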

\begin{proof}
	We begin by noting that 
	\begin{equation}\label{eqn:discCtc}
		C^{i\to j}_p(h) = \left\{ \begin{array}{ll}
			X^{i,j}_p\left(C^i_p(h-1)\right) &: i\neq 0, j\\
			k_{j,p}+Y^{j}_p(h) &: i = 0
		\end{array}\right..
	\end{equation} Indeed, all type $i$ vertices of height at most $h-1$ are enumerated by $w^i_\ell$ for $\ell = 0,1,\dots, C^i_p(h-1)-1$. We get the above discrete time change observation by noting that every vertex of type $j$ and height at most $h$ as a parent of height at most $h-1$. This is essentially the same backwards induction argument used in the justification of \eqref{eqn:dtc}.
	
	We now observe the following
	\begin{align*}
		\frac{1}{p\gamma_p} C^j_p([\gamma_pv]) = \frac{1}{p\gamma_p} \sum_{h=0}^{[\gamma_pv]}Z^{j}_p(h) &= \int_0^{[\gamma_pv]+1} \frac{1}{p\gamma_p}Z_p^j([s])\,ds\\
		&=  \int_0^{([\gamma_p v]+1)/\gamma_p} \frac{1}{p} Z_p^j([\gamma_ps])\,ds.
	\end{align*} Hence, the we get
	$\displaystyle \left(\frac{1}{p\gamma_p} C^j_p([\gamma_pv]);v\ge 0\right) \Longrightarrow (\C^j_v;v\ge0)$ in $\D(\R_+,\R)$, where $\C^j_v$ is as in \eqref{eqn:Ztc} by the continuous mapping theorem. Moreover, this convergence can easily be seen to hold jointly with the convergences of \eqref{eqn:zconv} and (\textbf{A1}).
	
	The desired convergence in \eqref{eqn:ulim} thus follows from equation \eqref{eqn:discCtc} and standard time-change results. Indeed, the scaling limit of the processes $X_p^{i,j}$ are linear functions and consequently are almost surely continuous. By \cite[Lemma pg. 151]{B_CPM} and the representation in \eqref{eqn:iDef}, the following convergence holds
	\begin{equation*}
		\begin{split}
			\left(\frac{1}{p} I_p^j\left([\gamma_p v]\right);v\ge 0\right) &= \left(\frac{1}{p}\left(k_{j,p}+Y^j_p([\gamma_p v]) +  \sum_{i\neq j} X^{i,j}_p\left(  C_p^i([\gamma_p v]) \right) \right);v\ge 0 \right)\\
			&\Longrightarrow \left(x_j + \delta_j v+  \sum_{i\neq j} \alpha_{i,j} \C^i_v\right).
		\end{split}
	\end{equation*}
	
	The statement about $\U^j$ being strictly increasing follows from the observations that $\U^j$ has differentiable paths almost surely (the processes $\C^i$ are differentiable) and its derivative
	\begin{equation*}
		\frac{d}{dv} \U^j_v = \delta_j+ \sum_{i\neq j} \alpha_{i,j} \Z^i_v>0, \qquad \text{Lebesgue a.e. }v.
	\end{equation*} This follows from
	 $\mathbf{Z}^i\ge 0$ a.s. since $\Z = (\Z^1,\dotsm , \Z^N)$ is a Feller process on $[0,\infty)^N$ and $\delta_j>0$.
\end{proof}

We have now gathered all of the pieces necessary for proving the existence of a solution to equation \eqref{eqn:HSDE}. Instead of stating the proof of Theorem \ref{thm:existH}, we prove the theorem below, which is easily seen to imply both Theorem \ref{thm:existH} and Corollary \ref{cor:bm}.
\begin{thm}\label{thm:cevHconv}
	Suppose $\boldsymbol{\mu}_p, \boldsymbol{\nu}_p$ satisfy assumption (\textbf{A}) and $\vec{k}_p$ satisfy $k_{j,p}/p\to x_j$ as $p\to\infty$.
	Then
	\begin{equation*}
		\left(\left(\frac{1}{\gamma_p} \cev{H}^j_p([p\gamma_pt]);j\in [N], t\ge 0  \right);\left(\frac{1}{p}Z_p^j([\gamma_pv]);j\in [N], v \ge 0  \right) \right)\\
	\end{equation*} along a subsequence converges weakly in the Skorohod space to 
	\begin{equation*}
		\left(\left( \cev{\HH}_t^j; j\in [N],t\ge 0\right) ; \left(\LL_\infty^v(\cev{\HH}^j);j\in [N], v\ge0 \right)\right).
	\end{equation*}
where
	$$
	\cev{\HH}^j_t = \HH^j_t + \inf\left\{x>0 : \U^j_x > \ell^j_{t} \right\}.
	$$
\end{thm}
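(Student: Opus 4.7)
The plan is to use the explicit discrete decomposition \eqref{eqn:cevHf}, pass to the limit term by term via the preparatory lemmas of Section \ref{subsec:cevH}, and then identify the limit of $Z^j_p/p$ as the terminal local time using the tautological occupation-density identity built into the forest model.

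First I would start from
\[
\cev{H}^j_p(i) = H^j_p(i) + \inf\{h \ge 0 : I^j_p(h) > -\underline{D}^j_p(i)\},
\]
substitute $i = [p\gamma_p t]$, and rescale by $\gamma_p^{-1}$. Using monotonicity of $I^j_p$, the rescaled infimum can be written as
\[
\inf\bigl\{x \ge 0 : p^{-1}I^j_p([\gamma_p x]) > -p^{-1}\underline{D}^j_p([p\gamma_p t])\bigr\}.
\]
Lemma \ref{lem:dhd} supplies joint convergence $\gamma_p^{-1} H^j_p([p\gamma_p \cdot]) \to \HH^j$ and $-p^{-1}\underline{D}^j_p([p\gamma_p \cdot]) \to \ell^j$, while Lemma \ref{lem:ulim} gives $p^{-1}I^j_p([\gamma_p \cdot]) \to \U^j$, the latter being strictly increasing, continuous, and divergent (because $\delta_j > 0$). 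These three properties make the generalized right-inverse continuous at every level, so a standard inverse-continuous-mapping argument yields convergence of the rescaled infimum, jointly with the rest, to $\inf\{x > 0 : \U^j_x > \ell^j_t\}$. Adding the two limits reproduces the claimed formula for $\cev{\HH}^j$.

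Next, Lemma \ref{lem:discZ} supplies $p^{-1}Z^j_p([\gamma_p v]) \Rightarrow \Z^j_v$ jointly with everything above; extracting a convergent subsequence and invoking Skorokhod's representation theorem I may assume all of the convergences hold almost surely. To connect $\Z^j$ with a local time of $\cev{\HH}^j$, I would exploit the tautological discrete identity
\[
\sum_{i \ge 0} g\bigl(\cev{H}^j_p(i)/\gamma_p\bigr) = \sum_{h \ge 0} Z^j_p(h)\, g(h/\gamma_p),
\]
valid for any $g$ since both sides total $g$ over the heights of all type-$j$ vertices. Dividing by $p\gamma_p$ and taking $g$ continuous with compact support, the right side converges by a Riemann-sum argument to $\int_0^\infty g(v)\Z^j_v\, dv$ and the left side to $\int_0^\infty g(\cev{\HH}^j_t)\, dt$. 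Equating the two limits and varying $g$ gives the occupation-density formula, so setting $\LL_\infty^v(\cev{\HH}^j) := \Z^j_v$ produces a c\`adl\`ag terminal local time since the MCBI limit is Feller.

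The main obstacle, I expect, is the limiting step on the $\cev{H}^j$-side of the occupation-density identity: Skorokhod convergence of $\cev{H}^j_p/\gamma_p$ on any finite time interval does not by itself control the infinite Riemann sum $(p\gamma_p)^{-1}\sum_i g(\cev{H}^j_p(i)/\gamma_p)$. To handle this I would use that $\cev{\HH}^j_t \to \infty$ almost surely (because $\ell^j_t \to \infty$ and $\U^j$ diverges), so for $g$ supported on $[0, V]$ the limiting integral lives on a bounded random time window; on the discrete side, $(p\gamma_p)^{-1}C^j_p([V\gamma_p]) \to \C^j_V < \infty$ gives the analogous uniform control, and one can truncate both sums at a common random horizon $T$ on the Skorokhod-representation space. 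The inverse-continuous-mapping step also requires care, but the strict monotonicity and continuity of $\U^j$ ensure the inverse is continuous in the uniform topology, which is compatible with Skorokhod convergence because the limit $\U^j$ is continuous.
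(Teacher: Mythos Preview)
Your argument for the convergence of $\gamma_p^{-1}\cev{H}^j_p([p\gamma_p\,\cdot\,])$ is essentially identical to the paper's: both start from \eqref{eqn:cevHf}, rescale the infimum, and pass to the limit by combining Lemmas \ref{lem:dhd} and \ref{lem:ulim} with a first-passage continuous-mapping step (the paper cites Whitt's theorem explicitly; you call it a ``standard inverse-continuous-mapping argument''), then appeal to tightness to pass to a jointly convergent subsequence.

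Where you diverge is in the identification $\Z^j_v = \LL_\infty^v(\cev{\HH}^j)$. The paper does not argue this directly: it simply quotes Lemma \ref{lem:discZ} for the convergence of $p^{-1}Z^j_p$ and then defers the joint convergence and the local-time identification to the proof of \cite[Corollary~2.5.1]{DL_Levy} and \cite[Theorem~1.5]{Duquesne_CRTI}. Your route via the discrete occupation identity and Skorokhod representation is a genuine alternative: it is more self-contained and makes the occupation-density statement in Theorem~\ref{thm:existH} transparent, at the cost of the truncation/tail-control step you correctly flag. The paper's route is shorter on the page but leans on nontrivial arguments in the cited works (which, in the single-type case, also ultimately go through an occupation-type identity). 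Either approach is sound; yours has the advantage of making the mechanism visible within the present setup.
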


\begin{remark}
	We observe that Theorem \ref{thm:existH}, and hence Corollary \ref{cor:bm} follows from the above theorem. Indeed, Theorem \ref{thm:existH} follows from the observation that $U_0^j = x_j$ and $\frac{d}{dx} \U_x^j = \delta_j + \sum_{i\neq j} \lambda_{i,j} \LL_\infty^x(\HH^j)$. Hence by \cite[Proposition 0.4.6]{RY} the process
	$$
	\J^j_t := \inf\left\{x>0 : \U^j_x> \ell_{t}^j \right\},
	$$ must satisfy
	$$
	\J^j_t = \int_{\tau_{x_j}^j}^{t\vee\tau_{x_j}^j} \frac{1}{\delta_j + \sum_{i\neq j} \alpha_{i,j} \LL_\infty^{\J^j_s}(\HH^i)} \,d\ell_s^j,\qquad \tau_{x}^j = \inf\{t: \ell^j_t > x\}.
	$$
	
\end{remark}

\begin{proof}[Proof of Theorem \ref{thm:cevHconv}]
	The proof follows quite easily from the various lemmas we have proved and discrete processes which we have defined. We observe from \eqref{eqn:cevHf} the following holds
	\begin{align*}
		\frac{1}{\gamma_p} \cev{H}^j_p([p\gamma_pt]) &= \frac{1}{\gamma_p}H^j_p([p\gamma_pt]) + \frac{1}{\gamma_p}\inf\left\{ h : I_p^j(h)>  -\underline{D}_p^j([p\gamma_pt])\right\}\\
		&= \frac{1}{\gamma_p} H_p^j ([p\gamma_p t]) + \inf\left\{\frac{h}{\gamma_p} : \frac{1}{p} I_p^j(h) > \frac{-1}{p}\underline{D}_p^j([p\gamma_pt])\right\}
		\\
		&= \frac{1}{\gamma_p}H^j_p([p\gamma_pt]) + \inf\left\{ x : \frac{1}{p}I_p^j([\gamma_px])> -\frac{1}{p} \underline{D}_p^j([p^2t])\right\}.
	\end{align*} 
	We now recall a result of Whitt \cite{Whitt_WC}. It states that on the space $\D^\upto(\R_+,\R_+):= \{g\in \D(\R_+,\R_+): \sup g = \infty\}$, the first passage time is continuous at each strictly increasing function. In terms of functions, the function
	$$
	F:\D^{\upto}(\R_+,\R_+)\to \D(\R_+,\R_+) \qquad\text{ by}\qquad F(f)(t) = \inf\{s>0 : f(s)>t\}
	$$ is continuous at each $f$ which is strictly increasing and diverge to infinity. 
	
	Hence, by Lemma \ref{lem:ulim} we know that $\U^j_x$ is almost surely continuous, diverging towards infinity and is strictly increasing and hence we can conclude in $\D(\R_+,\R_+)$ that
	\begin{equation*}
		\left(\inf\left\{x: \frac{1}{p}I^{j}_p([\gamma_px])>v \right\}; v \ge 0 \right)\Longrightarrow \left(\inf\left\{x: \U^j_x>v \right\}; v \ge 0 \right). 
	\end{equation*} Since $-\underline{D}^j_p$ is non-decreasing, Lemma \ref{lem:dhd} and \cite[Lemma pg. 151]{B_CPM}
	\begin{equation*}
		\left(\inf\left\{ x : \frac{1}{p}I_p^j([\gamma_px])> -\frac{1}{p} \underline{D}_p^j([p^2t])\right\}; t \ge 0 \right) \Longrightarrow \inf\left\{x>0 : \U^j_x> \ell_{t}^j \right\}.
	\end{equation*} By a tightness argument, we can take this convergence to be joint with the convergence of $\frac{1}{\gamma_p}H^j_p([p\gamma_pt])$ towards $\HH^j_t$ at least along a subsequence. Hence we prove the convergence of the left-height process. 
	
	The rescaled process $\frac{1}{p}Z^j_p([\gamma_pv])$ converges by Lemma \ref{lem:discZ}. The joint convergence follows from similar tightness arguments as the proof of Corollary 2.5.1 in \cite{DL_Levy}. See also Theorem 1.5 in \cite{Duquesne_CRTI} for a similar result which relies on the proof of Corollary 2.5.1. This completes the proof of the result.
\end{proof}

\subsection{Consequences of Theorem \ref{thm:cevHconv}}\label{sec:con}

In this subsection we discuss the consequences of Theorem \ref{thm:cevHconv}. We observe that by Lemma \ref{lem:discZ}, Theorem \ref{thm:cevHconv} above and Theorem 1 in \cite{CPU_affine} the local time of the processes $\cev{\HH}^j$ is a multitype continuous state branching process. We state this as the following proposition:
\begin{prop}\label{prop:mcbi}
	The processes $\Z_v^j = \LL_\infty^v(\cev{\HH}^j)$, $j\in[N]$ form a multitype continuous state branching process with immigration determined by equation \eqref{eqn:Ztc}.
\end{prop}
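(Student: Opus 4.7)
The plan is to invoke uniqueness of weak limits. The rescaled discrete height profile $(\frac{1}{p} Z^j_p([\gamma_p v]); v \ge 0, j \in [N])$ appears as a convergent sequence in two different contexts in the excerpt. Lemma \ref{lem:discZ} identifies its weak limit as the unique solution $\Z = (\Z^1, \dots, \Z^N)$ to the multiparameter time-change equation \eqref{eqn:Ztc}. On the other hand, Theorem \ref{thm:cevHconv} asserts that, along a subsequence, the same rescaled profile converges jointly with the rescaled left-height processes to $(\cev{\HH}^j, \LL_\infty^\cdot(\cev{\HH}^j))_{j \in [N]}$. Since a convergent sequence in Skorokhod space has at most one weak limit, the law of $(\LL_\infty^v(\cev{\HH}^j); v \ge 0)_{j \in [N]}$ must agree with that of $\Z$.

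Having made this identification, I would then invoke Theorem 1 in \cite{CPU_affine}, recalled in Section \ref{sec:mbp}, which asserts that the unique c\`adl\`ag solution to \eqref{eqn:Ztc}, when driven by independent $\R^N$-valued L\'evy processes $\X^i$ (with $\X^{j,j}$ spectrally positive and $\X^{i,j}$ a subordinator for $i \neq j$) together with subordinators $\Y^j$, is an MCBI. Under assumption \textbf{(A2)} these drivers are independent with exactly the required structure: $\Y^j_t = \delta_j t$ and $\X^{i,j}_t = \alpha_{i,j} t$ for $i \neq j$ are linear (hence subordinators), while $\X^{j,j}$ is spectrally positive with Laplace exponent $\psi_j$ satisfying \eqref{eqn:psijs}. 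Combining this with the equality in law from the previous paragraph concludes that $(\LL_\infty^v(\cev{\HH}^j); v \ge 0)_{j \in [N]}$ is itself an MCBI determined by \eqref{eqn:Ztc}.

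The one point worth verifying is that the subsequential character of Theorem \ref{thm:cevHconv} does not obstruct the argument. Because Lemma \ref{lem:discZ} yields convergence of the rescaled profile to $\Z$ along the \emph{full} sequence, every subsequence extracted in Theorem \ref{thm:cevHconv} must produce the same $\Z$-component in the joint limit; consequently the equality in law between $\Z$ and $\LL_\infty^\cdot(\cev{\HH}^j)_{j \in [N]}$ is independent of the chosen subsequence. Since no additional estimates are required beyond the lemmas already proven, the proof is essentially a matter of assembling these pieces, and I do not anticipate a serious technical obstacle.
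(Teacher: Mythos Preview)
Your proposal is correct and follows essentially the same approach as the paper: the paper's proof is a single sentence invoking Lemma \ref{lem:discZ}, Theorem \ref{thm:cevHconv}, and Theorem 1 of \cite{CPU_affine}, and you have accurately unpacked exactly those ingredients, including the uniqueness-of-limits argument needed to identify $\Z$ with $\LL_\infty^\cdot(\cev{\HH}^j)$. Your remark about the subsequential nature of Theorem \ref{thm:cevHconv} not obstructing the identification is a helpful clarification that the paper leaves implicit.
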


Stochastic equations for $\Z$ are possible thanks to the results of \cite{BLP_SDE}. We now use that work and focus on the Brownian situation of Corollaries \ref{cor:bm} and \ref{cor:zsde}. In particular we assume that $\X^{j,j}$ have Laplace exponents of the form
\begin{equation*}
	\psi_j(\lambda) = -\alpha_{j,j}\lambda + \beta_j \lambda^2
\end{equation*} where $\beta_j>0$ and $\alpha_{j,j}\le 0$. In turn, the process $\X^{j,j}_t = \sqrt{2\beta_j} B^j_t + \alpha_{j,j}t$ is a (constant multiple of a) Brownian motion with negative drift. The computations in Section 5 of \cite{BLP_SDE} imply that
\begin{equation*}
	\Z_v^j = x_j + \int_0^v \left(\delta_j + \sum_{i=1}^N \alpha_{i,j} \Z^i_s\right)\,ds + \int_0^v\sqrt{2\beta_j \Z_s^j }\,dW^j_s, 
\end{equation*} for an $\R^N$-valued Brownian motion $(W^1,\dotsm, W^N)$. Alternatively, this can be seen more directly from equations \eqref{eqn:Ztc} by writing $X^{i,j}_{\C^i_v}$ for $i\neq j$ as an integral and applying the Dambis-Dubins-Schwarz theorem \cite[Theorem V.1.6]{RY} to the the term $\X^{j,j}\circ \C^j(v)$. Indeed, 
\begin{align*}
	\X^{j,j}\circ \C^j(v) &= \sqrt{2 \beta_j} B^j\circ \C^j(v) + \alpha_{j,j} \int_0^v \Z^j_s\,ds\\
	&= \int_0^v \sqrt{2 \beta_j \Z^j_s}\,dW_s^j + \int_0^v \alpha_{j,j}\Z^j_s\,ds.
\end{align*} From here Corollary \ref{cor:zsde} follows. 

\section*{Acknowledgements}

The author was supported by NSF grant DMS-1444084.

%
%



\bibliographystyle{imsart-number} 
\bibliography{references}       


\end{document}